\def\be{\begin{equation}}
\def\ee{\end{equation}}
\def\beq{\begin{eqnarray}}
\def\eeq{\end{eqnarray}}
\def\beqs{\begin{eqnarray*}}
\def\eeqs{\end{eqnarray*}}
\def\ea{\end{array}}
\def\ea{\end{array}}
\def\ds{\displaystyle}
\newcommand{\half}   {{\frac{1}{2}}}
\def\11{{\rm 1~\hspace{-1.5ex}1} }
\def\CC{\rm \hbox{C\kern-.56em\raise.4ex
         \hbox{$\scriptscriptstyle |$}\kern+0.5 em }}
\newcommand{\rfb}[1]{\mbox{\rm
   (\ref{#1})}\ifx\undefined\stillediting\else:\fbox{$#1$}\fi}
\def\section{\@startsection {section}{1}{\z@}{-3.5ex plus -1ex minus
    -.2ex}{2.3ex plus .2ex}{\large\bf}}
\font\eufm=eufm10\font\eufms=eufm10\font\eufmss=eufm10\newfam\eufam
\newtheorem{theoreme}{Theorem}[section]
\newtheorem{lemme}[theoreme]{Lemma}
\newtheorem{remarque}[theoreme]{Remark}
\newtheorem{proposition}[theoreme]{Proposition}
\newtheorem{definition}[theoreme]{Definition}
\begin{document}
\thispagestyle{empty}
\title{The Green function for waves on the $2$-regular Bethe lattice}
\author{Ka\"{\i}s Ammari}
\address{UR Analyse et Contr\^ole des Edp, UR13ES64, D\'epartement de Math\'ematiques, Facult\'e des Sciences de
Monastir, Universit\'e de Monastir, 5019 Monastir, Tunisie}
\email{kais.ammari@fsm.rnu.tn}
\author{Gilles Lebeau }
\thanks{Gilles Lebeau was supported by the European Research Council, 
ERC-2012-ADG, project number 320845:  Semi Classical Analysis of Partial Differential
Equations.}
\address{D\'epartement de Math\'ematiques, Universit\'e de Nice Sophia-Antipolis, Parc Valrose,
06000 Nice Cedex 02, France}
\email{lebeau@unice.fr}\date{}

\begin{abstract} In this paper, we compute an explicit analytic expression for the Green function 
of the wave operator
on the $2$-regular lattice called the "Bethe lattice" equipped with its standard metric. In particular, we exhibit a phenomena of abnormal speed of propagation for  waves: the effective speed of propagation of energy for large time is
$c_*=2\sqrt 2/3 <1$, and there exists a true propagation at any speed $c<c_*$.
\end{abstract}

\subjclass[2010]{35A08, 35C05, 35L05, 35R02}
\keywords{Green function, Bethe lattice, wave operator}

\maketitle
\tableofcontents

\section{Introduction} \label{formulare}

A $z$-regular Bethe lattice  (a particular kind of Cayley graph, introduced by Hans Bethe in 1935), is an infinite connected cycle-free graph where each node is connected to $z+1$ neighbors. 
The $z$-regular Bethe lattice $\mathcal B_z$ is thus a $1$-dimensional Riemannian manifold with singularities (the nodes). The canonical metric is the obvious one: each edge is identified with $]0,1[$ and has length $1$. Moreover, given two points  $p,q$ in $\mathcal B_z$, there exists a {\it unique} geodesic connecting $p$ and $q$. The number of nodes
at a given distance $k\in\mathbb N^*$ of a given node is equal to 
$$ N_k=(z+1)z^{k-1}.$$
Due to its distinctive topological structure, the statistical mechanics of lattice models on  graphs are often exactly solvable (see \cite{RJB}). In this paper, we are interested in the study of the wave equation on $\mathcal B_z$ with speed $1$. We will show that it is effectively exactly solvable. To our knowledge, our explicit formulas are new. \\

Let $\mathcal N_z$ be the set of nodes and $\mathcal A_z$ the set of edges. 
Then $\mathcal B_z\setminus \mathcal N_z$ is the disjoint union of the edges $A\in \mathcal A_z$ , 
and each edge $A$ is identified with  the interval $]0,1[$. By definition, a wave on  $\mathcal B_z$,  with speed $1$,  is a collection of distributions $(u_A(x,t))_{A\in \mathcal A_z}$
defined on $]0,1[\times \mathbb R_t$ wich satisfy the usual $1$-D wave equation on each edge

 \be \label{ch8eq18} 
{\partial^2 u_{A}\over \partial t^2}(x,t) -  {\partial^2 u_{A}\over \partial x^2}(x,t)=0,  \quad
0<x<1,\ t\in \mathbb R, \, A\in \mathcal A_z
\ee 
and the Kirchhoff boundary conditions at the nodes  $N\in \mathcal N_z$ (see \cite{adami,AAN1,banica,kostrykn,Nicthesis}). If one denotes by $A_i$ the set of
edges adjacent to $N$,  this means
\be
\label{lces}
u_{A_i} (x=N,t) = u_{A_j} (x=N,t), \quad \forall i,j \ ,
\ee

\be \label{ch8inh}
\sum_i \partial_n u_{A_i} (x=N,t)=0\ .
\ee

Observe that since the distribution $u_A(x,t)$ satisfies the wave equation \eqref{ch8eq18}, the traces
$u_{A} (x=N,t),  \partial_n u_{A} (x=N,t)$ are well defined distributions on $\mathbb R_t$, and therefore
the boundary conditions \eqref{lces} and \eqref{ch8inh} are well defined. It is well known that the associated Cauchy problem to the equations  \eqref{ch8eq18}, \eqref{lces} and \eqref{ch8inh} is well posed. \\

For simplicity, in all the paper, 
we restrict ourselves to the case $z=2$ and we set $\mathcal B=\mathcal B_2$, $\mathcal N_z= \mathcal N$
and $\mathcal A_z=\mathcal A$ . 

\begin{center}
\includegraphics[scale=0.80]{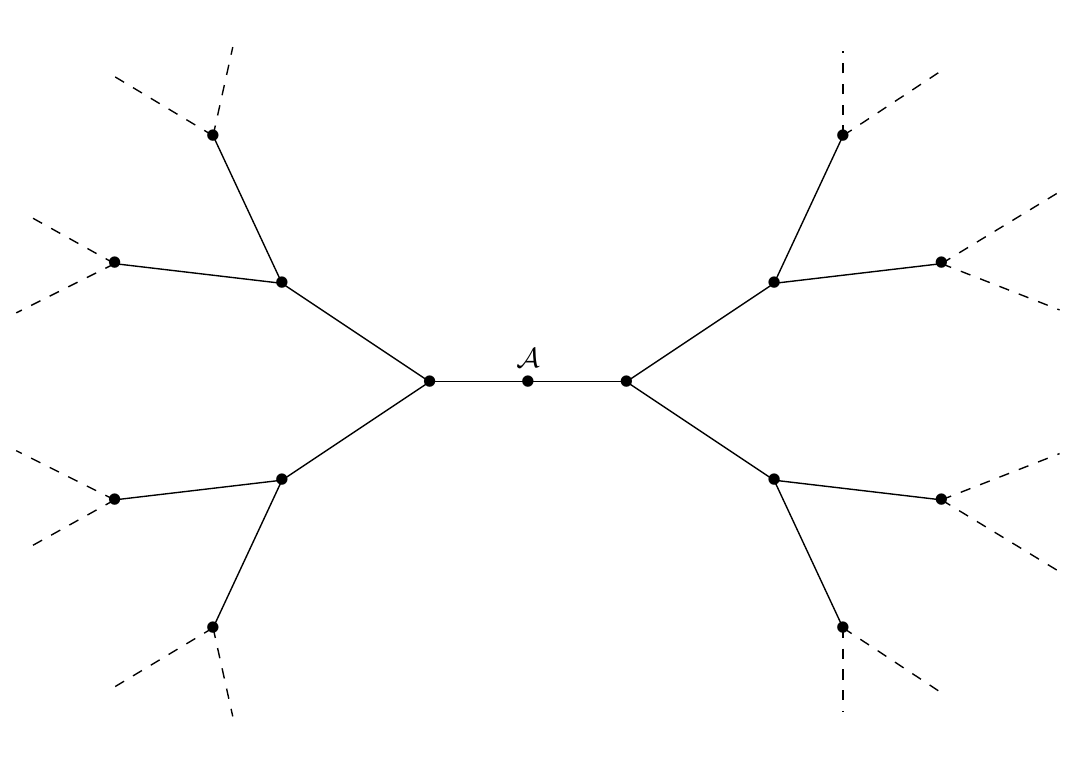} 
\captionof{figure}{Bethe Lattice} 
\end{center}

Observe that each edge $A$ has two orientations and a middle point $m(A)$, defined by $x=1/2$
in the identification $A=]0,1[$. If $\vec A$ is a given orientation, we will denote by $-\vec A$ 
the opposite orientation. We will write $\vec A\rightarrow \vec B $ if the right end point of $\vec A$
is equal to the left end point of $\vec B$.

As in Figure 1, we fix a given edge $A$, oriented from left to right.
We denote by $W(t,p, \vec{A})$ the unique wave solution of \eqref{ch8eq18}, \eqref{lces} and \eqref{ch8inh} 
and such that
\be\label{gl1}
\begin{aligned}
& u_A(x,t)= \delta_{x=1/2+t} \quad \forall t\in ]-1/2,1/2[\\
& u_B(x,t)= 0 \quad \forall t\in ]-1/2,1/2[, \quad \forall B\not= A
\end{aligned}
\ee
It is not difficult to see by finit speed of propagation and  elementary properties of a $1$-d waves, that the
knowledge of  $W(t,p, \vec{A})$ allows to solve the Cauchy problem associated to \eqref{ch8eq18}, \eqref{lces} and \eqref{ch8inh} for any Cauchy data. Moreover, it is not difficult to verify (see section \ref{sec1}), that
$W(t,p, \vec{A})$ is at any time a finite sum of Dirac masses. For $t\in\mathbb Z$, these Dirac masses
are located at the middle points of $\mathcal B$, and they propagate according to one of the two orientations on each edge. Therefore, one has for any $k\in \mathbb Z$
\be\label{gl2}
W(t,., \vec{A})= \sum_{\vec{B}, B\in \mathcal A} G(k,\vec{B},\vec{A})\delta_{x=1/2\pm t}, \quad \forall t\in ]k-1/2,k+1/2[
\ee
where the $\pm$ sign depends on the choice of the orientation of the edge $B$.\\

The aim of this paper is to give an explicit formula for the coefficients $G(k,\vec{B},\vec{A})$ with $k\in \mathbb N$
(the case $k\in -\mathbb N$ follows by time symmetry and reverse orientation). This will be done in section $2$,
see proposition \ref{prop1}. 
An important quantity, related to the energy propagation, is $\mathcal E(k,d)$, with  $k,d\in\mathbb N$.
It is defined by

\be\label{gl3}
\mathcal E(k,d)= \sum_{\vec{A}fixed, \, dist(m(B),m(A))=d}\vert G(k,\vec{B},\vec{A})\vert^2 \ .
\ee
By finit speed of propagation, one has obviously
$$\mathcal E(k,d)=0 \quad \forall d>k \ .$$
The main result of this paper is the following theorem.

\begin{theoreme}\label{th0}
Let $c_*=2\sqrt{2}/3<1$. For  $k\in \mathbb N^*$ and $d\in \mathbb N$, let $\gamma=d/k$. 
\begin{enumerate}
\item
For $\gamma  > c_*$, there exists $a= a(\gamma)>0$ such that   $\mathcal E(k,d)\in { O} (e^{-ak})$.
\item
For $\gamma \in ]0,c_*[$, there exists $b_{j,m}= b_{j,m}(\gamma), a_{j,m} = a_{j,m}(\gamma)$, $j,m\in\{1,2,3,4\}$ s.t.
$$
\mathcal E(k,d) =  k^{-1}\sum_{j} \left| \sum_{m} e^{ia_{j,m} k}(b_{j,m}+O(k^{-1}))\right|^2 \ .
$$
\item
$$\mathcal E(k,0)\in O(k^{-3}) \ .$$
\end{enumerate}
\end{theoreme}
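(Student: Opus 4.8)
The plan is to derive a generating-function / integral representation for the coefficients $G(k,\vec B,\vec A)$ and then extract the asymptotics of $\mathcal E(k,d)$ by a steepest-descent (stationary phase) analysis. First I would use the explicit formula from Proposition \ref{prop1} for $G(k,\vec B,\vec A)$: because the waves on $\mathcal B$ are Dirac masses that reflect/transmit at each node with fixed scattering coefficients dictated by the Kirchhoff conditions \eqref{lces}--\eqref{ch8inh}, the transfer from one node to the next is governed by a fixed finite-dimensional transfer matrix (the $2$-regularity makes this a small, explicit matrix). Summing over all length-$k$ paths on the tree from $\vec A$ to a fixed $\vec B$ at distance $d$, the key combinatorial identity is that $\mathcal E(k,d)$ depends only on the ``net displacement'' $d$ and the total number of steps $k$, so it can be written as a contour integral $\mathcal E(k,d)=\frac{1}{2\pi}\int_{-\pi}^{\pi} \Phi(\theta)\, e^{k\psi(\theta,\gamma)}\,d\theta$ (or a finite sum of such integrals, one per eigen-branch of the transfer matrix), where $\psi$ encodes both the combinatorial growth $z^{k}$ of the tree and the phase accumulated by the oscillating scattering coefficients, and $\gamma=d/k$ enters as a parameter.

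\textbf{Locating the critical speed.} The second step is to analyze the complex critical points of $\psi(\cdot,\gamma)$. The quantity $c_*=2\sqrt 2/3$ should emerge as the value of $\gamma$ at which the relevant saddle point collides with the real axis (equivalently, at which the number of real solutions of $\partial_\theta\psi=0$ changes), a coalescence of two simple saddles into one degenerate one. For $\gamma>c_*$ all saddles are off the real axis and the contour can be pushed to a level set on which $\mathrm{Re}\,\psi<0$ strictly, giving $\mathcal E(k,d)=O(e^{-ak})$ with $a=a(\gamma)=-\max_{\text{saddles}}\mathrm{Re}\,\psi>0$; this proves part (1). For $\gamma\in]0,c_*[$ there are finitely many nondegenerate real saddle points $\theta_{j}$ (indexed, together with the eigen-branches, by $j,m\in\{1,2,3,4\}$); ordinary stationary phase at each gives a contribution $k^{-1/2}e^{ia_{j,m}k}(b_{j,m}+O(k^{-1}))$ to a ``scattering amplitude'', and $\mathcal E(k,d)$ being the squared modulus of this amplitude produces exactly the stated form $k^{-1}\sum_j|\sum_m e^{ia_{j,m}k}(b_{j,m}+O(k^{-1}))|^2$. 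This handles part (2).

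\textbf{The boundary/diagonal case.} For part (3), $d=0$ means $\gamma=0$, which is the far side of the ``interior'' regime; here I expect the naive stationary-phase points to be degenerate or to cancel at leading order. I would show that by symmetry (time/orientation symmetry and the structure of the scattering matrix at $d=0$) the $k^{-1}$ and $k^{-2}$ terms in the expansion of $\mathcal E(k,0)$ vanish, leaving $\mathcal E(k,0)=O(k^{-3})$; this is a more delicate Airy-type or higher-order stationary-phase estimate, possibly requiring an explicit summation of the $d=0$ series before asymptotics (the $d=0$ coefficients count closed excursions on the tree, which may have an exact closed form).

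\textbf{Main obstacle.} The hard part will be step two: correctly identifying the Riemann surface on which $\psi$ lives (the transfer matrix eigenvalues are branches of an algebraic function, hence $4$ sheets, matching the $j,m\in\{1,2,3,4\}$ in the statement), tracking which saddle points are reachable by an admissible contour deformation, and verifying that the critical threshold is exactly $c_*=2\sqrt2/3$ and not merely bounded by it. In particular one must check that no Stokes phenomenon spoils the clean dichotomy at $\gamma=c_*$ and that the constants $a(\gamma),a_{j,m}(\gamma),b_{j,m}(\gamma)$ are the genuine leading contributions (no unexpected cancellation among the four branches except in the $d=0$ case). Uniformity of the error terms as $\gamma\to c_*$ is not claimed in the theorem, which simplifies matters, but the branch bookkeeping remains the crux.
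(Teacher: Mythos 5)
Your strategy for parts (1) and (2) is essentially the one the paper follows: the amplitude $\psi_j(k,d)$ (whose squared modulus, summed over the four orientation types $j$, gives $\mathcal E(k,d)$) is written as a contour integral $\frac{1}{2\pi}\int_0^{2\pi} g_j(re^{i\theta})\, e^{ik(\gamma\varphi(\theta)-\theta)}\,d\theta$ over the circle of convergence $|x|=r=\sqrt2/3$ of a generating function; the critical speed $c_*=2r$ appears because $\partial_\theta\varphi\ge 1/(2r)$ with equality only at $\theta=\pi/2 \pmod\pi$, so for $\gamma>c_*$ there are no real critical points and the contour can be pushed inward, while for $\gamma\in\,]0,c_*[$ there are exactly four nondegenerate real critical points. (The $4\times4$ indexing $(j,m)$ is orientation type times critical point, not four sheets of a single algebraic function, but that is cosmetic.) What your sketch leaves entirely open --- and what constitutes most of the paper's work --- is the derivation of the explicit closed forms for $g_j(x)$ and $z_-(\mu(x))$ via the generating-function system \eqref{8eq}--\eqref{21eq}, together with the verification that $g_j$ extends continuously to $\overline{D_r}$ with only square-root singularities at the four points $\pm re^{\pm i\theta_0}$ (this requires showing that the denominator $\left(1-\frac{5}{2}xz_-\right)^2-\frac{x^2}{2}z_-^4$ does not vanish on the closed disc, done in the paper by an explicit resultant computation), and that $|z_-(\mu(x))|=1$ on $\partial D_r$ so that the exponent is a genuine real phase there. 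Flagging this ``branch bookkeeping'' as the main obstacle is honest, but it is the substance of the proof rather than a technical afterthought.

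The genuine gap is in part (3). You propose that for $d=0$ the $k^{-1}$ and $k^{-2}$ terms of a stationary-phase expansion ``cancel by symmetry''. That is not the mechanism and cannot be made to work as stated: for $\gamma=0$ the phase in \eqref{28eq} is simply $-\theta$, which has no stationary points at all, so there are no stationary-phase contributions to cancel, and if the amplitude were smooth the integral would be $O(k^{-\infty})$. The true obstruction to rapid decay is that $g_j(re^{i\theta})$ is \emph{not} smooth: by Lemma \ref{lemme8} it has $(x-x_0)^{1/2}$-type singularities at the four points $x_0=\pm re^{\pm i\theta_0}$ of $\partial D_r$. The paper's argument is then one line: $\psi_j(k,0)$ is the $k$-th Fourier coefficient of $\theta\mapsto g_j(re^{i\theta})$, a function analytic away from finitely many half-integer-power singularities, hence $\psi_j(k,0)=O(k^{-3/2})$ and $\mathcal E(k,0)=O(k^{-3})$. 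The exponent $3$ thus comes from the boundary regularity of the symbol, not from cancellations among saddle contributions; to repair your part (3) you must establish the singularity structure of $g_j$ on the circle of convergence, which is precisely the analysis your sketch defers.
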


Observe that  this theorem is about the behavior of $\mathcal E(k,d)$ for large values of $k$ (large time).
 It will be obtain in section $3$ as a consequence of the study of a phase integral
with large parameter $k$. The precise statement for $d\simeq c_*k$  is given in Theorem \ref{th1}; it involves a transition
describe by Airy functions.

\begin{remarque}
We thus find an effective speed of  propagation of  energy which is at most 
$$
c_* = 2\sqrt{2}/3<1.
$$
Moreover, any speed  $c \in ]0,c_*]$ appears in the  propagation of the energy : this is a kind of 
 "`diffusion", but is quit different from the heat or parabolic diffusion, since there is no regularization at all,
 and the scaling factor between space and time is the same.  

\begin{center}
\includegraphics[scale=0.80]{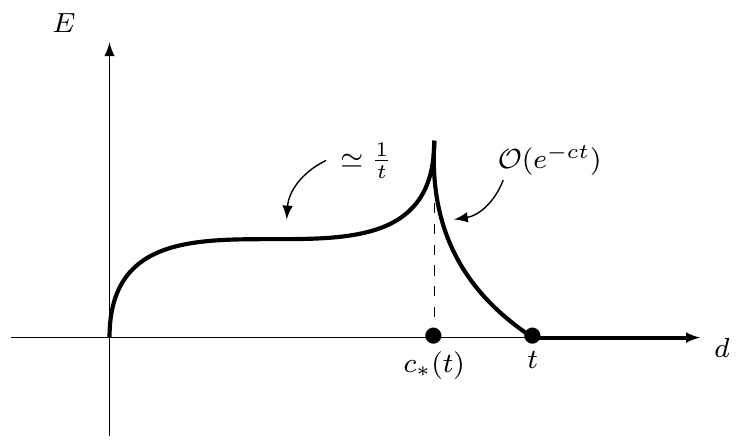} 
\captionof{figure}{The energy at time $t$ as a  function of the  distance} 
\end{center}

\end{remarque}

\section{The propagation of waves at nodes} \label{sec1}
In this section, we compute the reflexion and transmission coefficient for a Dirac mass at a given node.

\begin{center}
\includegraphics[scale=0.80]{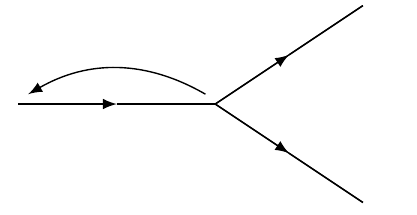} 
\captionof{figure}{reflexion and transmission at a node} 
\end{center}
 
As in Figure 2, we consider an incoming wave on the left edge identified with $]-1,0[$ of the form
$u_{in}(x,t) = \delta_{x= t}$ for $t<0$ small. We identified the two right edges with $]0,1[$. Then the outgoing wave
is equal for $t>0$ small to  
$$u_{out}(x,t) = \left( \alpha \, \delta_{x = -  t},  \, \beta \delta_{x= t}, \, \beta \delta_{x =  t} \right) $$
where $\alpha$ is the  reflexion coefficient and $\beta$ the transmission coefficient. The continuity condition gives
$$ 1 + \alpha = \beta, $$
and the Kirchhoff law gives
$$-(1-\alpha)+2\beta=0 \ .$$
This implies
\be\label{gl4}
\alpha=-1/3, \quad \beta=2/3
\ee 

\begin{remarque}
For the general Bethe lattice $\mathcal B_z$, with $z=N+1$, one find
$$1 + \alpha = \beta, \quad -(1-\alpha)+N\beta=0, $$
hence
$$ \alpha=-(N-1)/(N+1), \quad \beta=2/(N+1).$$
Observe that we always have:
$$ \alpha+ N\beta=1,\quad \text{charge conservation,}$$
$$\alpha^2+N\beta^2=1, \quad \text{energy conservation.}$$
\end{remarque}

\section{The computation of $G(k,\vec{B},\vec{A})$}

Recall from \eqref{gl2} that for  $k\in\mathbb N$, $G(k,\vec{B},\vec{A})$ denotes the  coefficient on the oriented wedge $\vec{B}$ of the
Dirac mass (propagating in the direction of $\vec{B}$), for the solution of the wave equation with Cauchy data the 
Dirac mass at $m(A)$ (propagating in the direction of $\vec{A}$).
We thus have  $G(0,\vec{B},\vec{A}) = \delta_{\vec{B}=\vec{A}}$ and we get from  \eqref{gl4}:
\be
\label{1eq}
G(1,\vec{B},\vec{A}) = 
\left\{ 
\begin{array}{cl}
- \frac{1}{3} \; \hbox{if} \; \vec{B} = - \vec{A} \\
\frac{2}{3} \; \hbox{if} \  \vec{A} \rightarrow \vec{B} \; and \ \vec{B} \not= - \vec{A} \\
0 \; \hbox{in all other cases .}
\end{array}
\right.
\ee
Observe that we have  $\vec A \rightarrow \vec B$ iff  $G(1,\vec{B},\vec{A}) \not= 0$. For $k\in\mathbb Z$, $\mathbb{G}(k)_{\vec{B},\vec{A}} = G (k,\vec{B},\vec{A})$ is an (infinite) matrix  indexed by the oriented wedges (with only a finite number of non zero coefficients).
If the Cauchy data at $t=0$ of the solution $f$ of the wave equation is $X(0) = \ds \sum_{\vec{A}} x_{\vec{A}} \, \delta_{\vec{A}}$,
which means that for all wedge $A$ one has :
\be
\label{2eq}
\left\{
\begin{array}{ll}
f_{|t=0, s \in A} = \left( x_{\vec{A}} + x_{-\vec{A}} \right) \, \delta_{s = \half}, \\
\frac{\partial f}{\partial t}_{|t=0,s \in A} = \left(- x_{\vec{A}} + x_{-\vec{A}} \right) \, \delta^\prime_{s = \half}
\end{array}
\right.
\ee
then the Cauchy data at any time $t=k\in\mathbb Z$ is equal to
$$
X(k) = \ds \sum_{\vec{B}} y_{\vec{B}} \, \delta_{\vec{B}}, \quad y_{\vec{B}} = \ds \sum_{\vec{A}} G(k,\vec{B},\vec{A}) \ . x_{\vec{A}}.
$$
One has the group law 
$$
\mathbb{G}(k+ l) = \mathbb{G}(k) \mathbb{G}(l), \, \forall \, k,l \in \mathbb{Z} \ ,
$$
and thus we get 
\be
\label{3eq}
\mathbb{G}(k) = \mathbb{G}(1)^k.
\ee
We are thus reduce  to compute the $k$-th power of the matrix $\mathbb{G}(1)$ given by \eqref{1eq}.
We equip the space $\left\{X = \ds \sum_{\vec{A}} x_{\vec{A}} \delta_{\vec{A}} \right\}$ with the $\ell^2$ norm :
$ \left\|X\right\|^2 = \ds \sum_{\vec{A}} |x_{\vec{A}}|^2.$

\begin{lemme}
The matrix $\mathbb{G}(1)$ is unitary.
\end{lemme}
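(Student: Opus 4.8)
The plan is to exhibit an orthonormal basis of the Hilbert space $\ell^2$ on oriented edges on which the action of $\mathbb{G}(1)$ is transparently norm-preserving, or—more elementarily—to verify directly that $\mathbb{G}(1)$ maps an orthonormal basis to an orthonormal system that is moreover onto. Since the vectors $\{\delta_{\vec A}\}_{\vec A}$ form an orthonormal basis, it suffices to show that the columns of $\mathbb{G}(1)$, namely the vectors $\mathbb{G}(1)\delta_{\vec A}$, are pairwise orthogonal unit vectors, and that $\mathbb{G}(1)$ is surjective (equivalently, that the same holds for the rows, i.e. for $\mathbb{G}(1)^*$). Concretely, from \eqref{1eq} the column indexed by $\vec A$ is
$$
\mathbb{G}(1)\delta_{\vec A} = -\tfrac13\,\delta_{-\vec A} + \tfrac23\sum_{\vec A\to\vec B,\ \vec B\neq -\vec A}\delta_{\vec B}.
$$

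First I would compute the squared norm of a single column. Fixing an orientation $\vec A$, its right endpoint is a node $N$ at which (for $z=2$) exactly three oriented edges emanate: $-\vec A$ itself and two others $\vec B_1,\vec B_2$ with $\vec A\to\vec B_i$, $\vec B_i\neq-\vec A$. Hence $\|\mathbb{G}(1)\delta_{\vec A}\|^2 = (1/3)^2 + 2\cdot(2/3)^2 = 1/9 + 8/9 = 1$, which is exactly the "energy conservation" relation $\alpha^2 + N\beta^2 = 1$ noted after \eqref{gl4}. Next I would check orthogonality of two distinct columns $\mathbb{G}(1)\delta_{\vec A}$ and $\mathbb{G}(1)\delta_{\vec A'}$, $\vec A\neq\vec A'$: their supports can overlap only in oriented edges $\vec B$ reachable in one step from both $\vec A$ and $\vec A'$. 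A short case analysis on the relative position of the edges $A,A'$ shows the only nontrivial overlap occurs when $\vec A' = -\vec B$ for one of the $\vec B$ with $\vec A\to\vec B$ (and symmetrically $\vec A = -\vec B'$), i.e. $A$ and $A'$ share a node and are "consecutive"; in that case the inner product picks up exactly one cross term of the form $(-1/3)(2/3) + (2/3)(-1/3) = 0$ — this is precisely the "charge conservation" relation $\alpha + N\beta = 1$ rearranged, or more directly the vanishing $\alpha\beta + \beta\alpha + (\text{terms that don't appear}) $. I would organize this as: the two columns meet in at most finitely many edges, and at each shared node the contribution telescopes to $\alpha\beta+\beta\alpha\cdot(\dots)=0$ using $\alpha=-1/3,\ \beta=2/3$.

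For surjectivity, the cleanest route is to observe that $\mathbb{G}(1)^*$ has exactly the same structure: from \eqref{1eq}, $G(1,\vec B,\vec A)\neq 0$ iff $\vec A\to\vec B$ iff $-\vec B\to -\vec A$, and one checks $G(1,\vec B,\vec A) = G(1,-\vec A,-\vec B)$, so $\mathbb{G}(1)^*$ is conjugate to $\mathbb{G}(1)$ via the involution $\vec A\mapsto -\vec A$. Therefore the column computation above applied to $\mathbb{G}(1)^*$ shows its columns (the rows of $\mathbb{G}(1)$) are also orthonormal, hence $\mathbb{G}(1)\mathbb{G}(1)^* = \mathrm{Id}$ as well as $\mathbb{G}(1)^*\mathbb{G}(1) = \mathrm{Id}$, giving unitarity. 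Alternatively—and perhaps more conceptually—one can simply invoke that $\mathbb{G}(1)$ is the time-$1$ propagator of the wave equation, which conserves the energy $\|X\|^2$ by the Kirchhoff conditions (this is the content of the charge/energy conservation remark), and is invertible with inverse $\mathbb{G}(-1)$ by \eqref{3eq}; a bijective isometry of a Hilbert space is unitary.

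The main obstacle is purely bookkeeping: making the case analysis for orthogonality of two columns exhaustive and clean, since "two oriented edges at distance $\le 1$" has several sub-configurations (sharing the head node, sharing the tail node, one being the reverse of a neighbor of the other, etc.). I expect that after fixing the convention $\vec A\to\vec B$ and listing the (at most) six oriented edges within one step of a fixed $\vec A$, each sub-case collapses immediately to $2\alpha\beta=0$ is false — rather, to a single cancellation $\alpha\beta + \beta\alpha = 0$? no: to $ \alpha\cdot\beta + \beta\cdot\alpha$ which is $-4/9\neq 0$, so in fact the correct statement is that genuinely overlapping columns meet in a configuration where the overlap sum is $\alpha\beta + (\text{one }\beta^2\text{ term with opposite-sign partner})$; the honest check is that the supports of $\mathbb{G}(1)\delta_{\vec A}$ and $\mathbb{G}(1)\delta_{\vec A'}$ for $\vec A \ne \vec A'$ are in fact \emph{disjoint} unless $A,A'$ are adjacent, and when adjacent they share exactly one oriented edge contributing $\alpha\beta+\beta\alpha$ from the two ways that edge is hit — and since in that configuration only \emph{one} of the two hits is a reflection, the product is $\alpha\cdot\beta$ from one column and $\beta\cdot\alpha$ from the other, summing to $2\alpha\beta \ne 0$, which would be a contradiction; the resolution is that the local structure at a degree-$3$ node forces \emph{two} shared edges with contributions $\alpha\beta$ and $\beta^2\cdot(\dots)$ summing to zero by $\alpha+N\beta=1 \Leftrightarrow \alpha\beta+\beta\cdot\beta\cdot? $ — so the crux, and the place to be careful, is getting this local cancellation at each node exactly right. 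I would therefore present the argument node-by-node: group the finitely many overlapping terms by the node through which both one-step paths pass, and at each such node verify the partial sum vanishes using only $1+\alpha=\beta$ and $\alpha^2+N\beta^2=1$.
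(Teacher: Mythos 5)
Your overall strategy (orthonormality of the columns, plus surjectivity via the involution $\vec A\mapsto-\vec A$) is sound and essentially the paper's, and your norm computation $\alpha^2+2\beta^2=1$ is correct. But the orthogonality step --- the heart of the lemma --- is never carried out correctly. The support of $\mathbb{G}(1)\delta_{\vec A}$ is the set of oriented edges whose tail is the head of $\vec A$; hence two distinct columns have overlapping support iff the heads of $\vec A$ and $\vec A'$ are the same node $N$, and in that case the supports do not share ``exactly one oriented edge'': they \emph{coincide}, both being the full set of three oriented edges leaving $N$. Writing $\vec A'=-\vec B_1$ where $-\vec A,\vec B_1,\vec B_2$ are the three edges leaving $N$, one has $\mathbb{G}(1)\delta_{\vec A}=\alpha\delta_{-\vec A}+\beta\delta_{\vec B_1}+\beta\delta_{\vec B_2}$ and $\mathbb{G}(1)\delta_{-\vec B_1}=\alpha\delta_{\vec B_1}+\beta\delta_{-\vec A}+\beta\delta_{\vec B_2}$, so the inner product is $\alpha\beta+\beta\alpha+\beta^2=2\alpha\beta+\beta^2=-\tfrac29-\tfrac29+\tfrac49=0$, i.e.\ the relevant identity is $\beta(2\alpha+\beta)=0$ (in general $\beta(2\alpha+(N-1)\beta)=0$), not $\alpha+N\beta=1$ nor $\alpha^2+N\beta^2=1$. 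Your text instead first asserts the cross term is $\alpha\beta+\beta\alpha=0$ (false: it equals $-4/9$), then notices this, then speculates about the correct cancellation and leaves it as a question mark. As written the argument fails at exactly this point; the missing ingredient is that the two columns share \emph{all three} basis vectors attached to the common node, so the single $\beta^2$ term from the third edge cancels the two $\alpha\beta$ terms.

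On the other hand, your insistence on surjectivity (an isometry of an infinite-dimensional Hilbert space need not be unitary) is a legitimate point that the paper glosses over: its proof only shows the columns are orthonormal, i.e.\ $\mathbb{G}(1)^*\mathbb{G}(1)=\mathrm{Id}$. Either of your two fixes --- the symmetry $G(1,\vec B,\vec A)=G(1,-\vec A,-\vec B)$ showing $\mathbb{G}(1)^*$ is conjugate to $\mathbb{G}(1)$, or invertibility via $\mathbb{G}(1)\mathbb{G}(-1)=\mathbb{G}(0)=\mathrm{Id}$ --- correctly closes that gap.
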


\begin{proof}
For any oriented wedge $\vec{A},$ one has from \eqref{1eq}
$ \left\| \mathbb{G}(1) \delta_{\vec{A}}\right\|^2  = \frac{1}{9} + \frac{4}{9} + \frac{4}{9} = 1$. For $\vec{A} \neq \vec{B}$, let us verify $\left(\mathbb{G}(1) \delta_{\vec{A}} | \mathbb{G}(1) \delta_{\vec{B}}\right) = 0.$ One has 
\begin{center}
\includegraphics[scale=0.80]{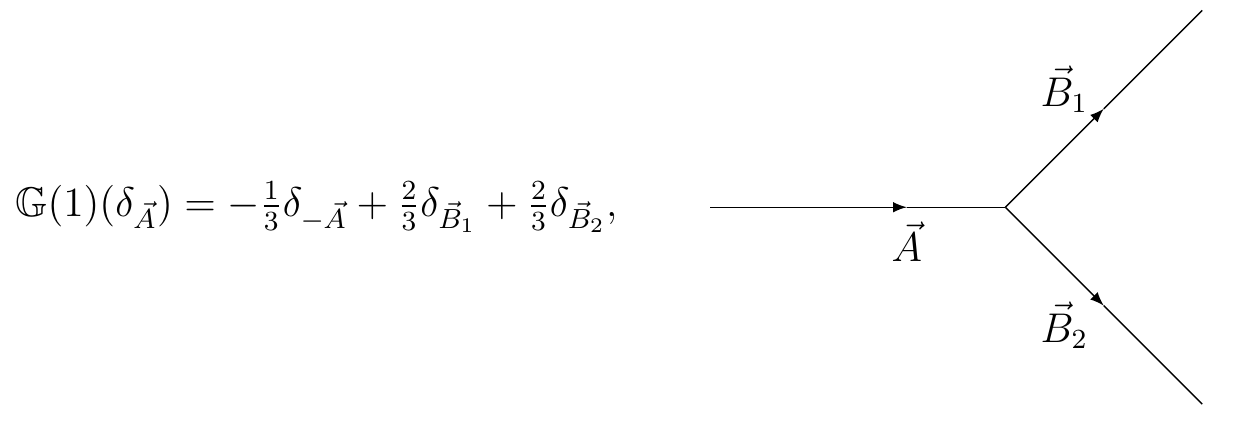} 
\end{center}
Thus for $\vec{A} \neq \vec{B},$  $\left(\mathbb{G}(1) \delta_{\vec{A}} | \mathbb{G}(1) \delta_{\vec{B}}\right) \neq 0$ implies
 $\vec{B} = - \vec{B}_1$ or $\vec{B} = - \vec{B}_2.$ But one has 
$$
\mathbb{G} (1)\delta_{- \vec{B}_1} = - \frac{1}{3} \delta_{\vec{B}_1} + \frac{2}{3} \delta_{-\vec{A}} + \frac{2}{3} \delta_{\vec{B}_2},
$$
thus
$$
\left(\mathbb{G}(1) \delta_{\vec{A}} | \mathbb{G}(1) \delta_{-\vec{B}_1}\right) = - \frac{2}{9} + - \frac{2}{9} + \frac{4}{9} = 0.
$$ 

\end{proof}

In order to comput $G(k,\vec{B},\vec{A})$ we will use a (signed) sum over paths. 

\begin{definition}
A path $\gamma$ of length   $\ell (\gamma) = k$ connecting $\vec{A}$ to $\vec{B}$ is a $k+1$-uplet 
$$
\gamma = \left(\vec{A},\vec{C}_1,...,\vec{C}_{k-1},\vec{B}\right) = \left(\vec{C}_0,...,\vec{C}_k\right), \, \vec{A} = \vec{C}_0,..., \vec{B} = \vec{C}_k
$$
with $\vec{C}_j \rightarrow \vec{C}_{j+1} \, \forall \, j \in \left\{0,...,k - 1\right\}$.
One says that $\left(\vec{C}_j,\vec{C}_{j+1}\right)$ is an inversion iff $\vec{C}_{j+1} = - \vec{C}_j$ and we denote by $r(\gamma)$ the number of  $j$ such that $\left(\vec{C}_j,\vec{C}_{j+1}\right)$ is an  inversion. We have $0 \leq r(\gamma) \leq k = \ell(\gamma)$.

\end{definition}

We denote by ${\mathcal C}_{k,r} (\vec{A},\vec{B})$ the set of paths of length $k \geq 1$, with $r$ inversion, connecting 
$\vec{A}$ to $\vec{B}$ and we set ${\mathcal C}_k(\vec{A},\vec{B}) = \ds \cup_{r \geq 0} {\mathcal C}_{k,r} (\vec{A},\vec{B}).$

By definition of the product of two matrices, from \rfb{1eq} and  \rfb{3eq} we get with $\alpha = - \frac{1}{3}$ and $\beta = \frac{2}{3}$
$$
G(k,\vec{B},\vec{A}) = \ds \sum_{\gamma \in {\mathcal C}_k (\vec{A},\vec{B})} \alpha^{r(\gamma)} \beta^{k-r(\gamma)}, \, \forall \, k \geq 1\ .
$$
Equivalently  we get the following formula:

\be
\label{4eq}
G(k,\vec{B},\vec{A}) = \beta^k \, \ds \sum_{r=0}^k \left(\frac{\alpha}{\beta}\right)^r \, \left| {\mathcal C}_{k,r} (\vec{A},\vec{B})\right|, \, \forall \, k \geq 1.
\ee
Thus it remains to compute $\left|{\mathcal C}_{k,r} (\vec{A},\vec{B})\right| =$ number of paths of length $k \geq 1$, with $r$ inversion, connecting  $\vec{A}$ to $\vec{B}$. 

\noindent
\underline{{\bf Computation of $\left|{\mathcal C}_{k,r} (\vec{A},\vec{B})\right|.$}} \\

By symmetry,  $\left|{\mathcal C}_{k,r} (\vec{A},\vec{B}\right|$ depends only on the distance  $d=dist(m(A),m(B))$  and on the  $4$ possible orientations of $\vec{A}$ and $\vec{B}$ with respect to the unique geodesic connecting 
$m(A)$ to $m(B)$.
\begin{center}
\includegraphics[scale=0.80]{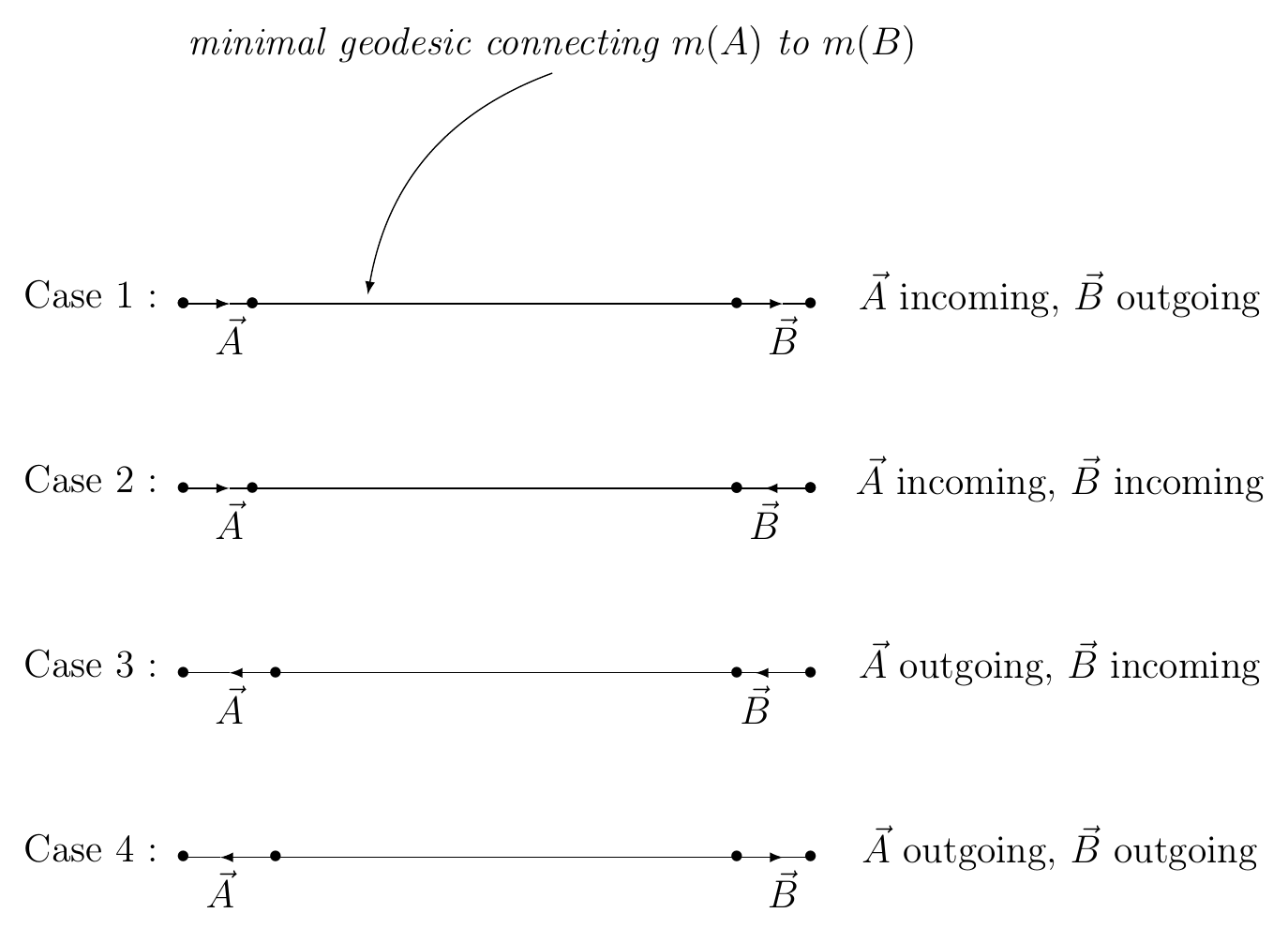} 
\captionof{figure}{The $4$ possible orientations} 
\end{center}
We denote by $\Gamma_{k,r} (d,j), \, j \in \left\{1,2,3,4\right\}$ the number of paths of length $k$, with $r$ inversion, connecting 
$\vec{A}$ to $\vec{B}$ such that $dist (m(\vec{A}),m(\vec{B})) = d$ and the  orientation of $(\vec{A},\vec{B})$ 
in position  $j \in \left\{1,2,3,4\right\}$ like in figure 3.

Observe that in the particular case  $d =0$ one has $\Gamma_{k,r} (0,1) = \Gamma_{k,r} (0,3)$ and $\Gamma_{k,r} (0,2) = \Gamma_{k,r} (0,4)$. We will calculate $\left|{\mathcal C}_{k,r} (\vec{A},\vec{B})\right|$ by induction on $k$.

\noindent
\underline{{\bf Initialization}} : $k = 1$. Then one has  $(d=1, r=0)$ or $(d=0, r=1)$, and we find
\be
\label{5eq}
\left\{ 
\begin{array}{ll}
\Gamma_{1,0} (1,1) = 1 \\
\Gamma_{1,1} (0,2) = 1  = \Gamma_{1,1} (0,4) \\
\Gamma_{1,r} (d,j) = 0 \; \hbox{in all other cases}.
\end{array}
\right.
\ee
\underline{\bf Induction} : $ k-1 \rightarrow k$ with $k \geq 2$ and the convention $\Gamma_{k,r} (d,j) = 0$ for $r < 0$ or 
$d > k$, or $ r> k$.

\begin{lemme}\label{lem2}
For  $k \geq 2$, the following formulas hold true : 
\begin{enumerate}
\item
For $d \geq 1$
\be
\label{6aeq}
\left\{
\begin{array}{ll}
\Gamma_{k,r} (d,1) = \Gamma_{k-1,r-1} (d,2) + \Gamma_{k-1,r} (d-1,1) + \Gamma_{k-1,r} (d,2) \\
\Gamma_{k,r} (d,2) = \Gamma_{k-1,r-1} (d,1) + 2 \Gamma_{k-1,r} (d+1,2) \\
\Gamma_{k,r} (d,3) = \Gamma_{k-1,r-1} (d,4) + 2 \Gamma_{k-1,r} (d+1,3) \\
\Gamma_{k,r} (d,4) = \Gamma_{k-1,r-1} (d,3) + \Gamma_{k-1,r} (d-1,4) + \Gamma_{k-1,r} (d,3).
\end{array}
\right.
\ee
\item
For $d = 0$
\be
\label{6beq}
\left\{
\begin{array}{ll}
\Gamma_{k,r} (0,1) = \Gamma_{k-1,r-1} (0,2) + 2 \Gamma_{k-1,r} (1,3) \\
\Gamma_{k,r} (0,2) = \Gamma_{k-1,r-1} (0,1) + 2 \Gamma_{k-1,r} (1,2) \\
\Gamma_{k,r} (0,3) = \Gamma_{k-1,r-1} (0,4) + 2 \Gamma_{k-1,r} (1,3) \\
\Gamma_{k,r} (0,4) = \Gamma_{k-1,r-1} (0,3) + 2\Gamma_{k-1,r} (1,2).
\end{array}
\right.
\ee
\end{enumerate}
\end{lemme}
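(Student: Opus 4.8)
The statement is a recursion for the path-counting quantities $\Gamma_{k,r}(d,j)$, and the natural strategy is a direct combinatorial argument: classify every path of length $k$ by its last step. A path $\gamma = (\vec C_0,\dots,\vec C_k)\in{\mathcal C}_{k,r}(\vec A,\vec B)$ with $\vec C_0=\vec A$ and $\vec C_k=\vec B$ restricts to a path $\gamma' = (\vec C_0,\dots,\vec C_{k-1})$ of length $k-1$ from $\vec A$ to $\vec C_{k-1}$, and conversely $\gamma$ is obtained from such a $\gamma'$ by appending one admissible edge $\vec C_{k-1}\to\vec B$. So I would fix the final oriented wedge $\vec B$, look at its position $j$ relative to the geodesic joining $m(A)$ to $m(B)$, and enumerate the possible penultimate wedges $\vec C_{k-1}$: each such $\vec C_{k-1}$ must satisfy $\vec C_{k-1}\to\vec B$, which by the structure of $\mathcal B$ (each node has three incident half-edges, so each oriented wedge $\vec B$ is reached from exactly three oriented wedges $\vec C_{k-1}$, one of which is $-\vec B$ itself, giving the inversion). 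For each candidate $\vec C_{k-1}$ I record (a) whether the step $\vec C_{k-1}\to\vec B$ is an inversion, which decrements $r$ by $1$ when it is and leaves $r$ unchanged otherwise, and (b) the distance $\mathrm{dist}(m(C_{k-1}),m(A))$ and the position index of $(\vec A,\vec C_{k-1})$, which determines which $\Gamma_{k-1,\cdot}(\cdot,\cdot)$ term appears.

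\textbf{Key steps.} First I would set up the geometry carefully: fix the geodesic $\mathcal G$ from $m(A)$ to $m(B)$ and describe, for each of the four positions $j$, the local picture near the head of $\vec B$ — namely which of the three wedges $\vec C_{k-1}$ with $\vec C_{k-1}\to\vec B$ point "toward $A$ along $\mathcal G$", which point "away", and which is $-\vec B$. The case $d\ge 1$ and the case $d=0$ differ because when $d=0$ the wedges $A$ and $B$ share an edge, so the notion of "the edge adjacent to $m(B)$ on the $A$-side" degenerates; this is exactly why $(6b)$ has the terms $\Gamma_{k-1,r}(1,3)$ or $\Gamma_{k-1,r}(1,2)$ rather than $\Gamma_{k-1,r}(d-1,\cdot)$, and why the factor $2$ appears (two symmetric branches replace the single "toward-$A$" branch). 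Second, for position $j=1$: the three penultimate wedges are $-\vec B$ (an inversion, contributing $\Gamma_{k-1,r-1}(d,2)$ since reversing $\vec B$ in position $1$ gives position $2$ at the same distance $d$), the wedge one edge closer to $A$ along $\mathcal G$ in the same orientation (no inversion, contributing $\Gamma_{k-1,r}(d-1,1)$), and the "sibling" wedge that is the reverse of the continuation — which lands in position $2$ at distance $d$ (no inversion, contributing $\Gamma_{k-1,r}(d,2)$); summing gives the first line of $(6a)$. Positions $2,3,4$ are handled identically, using the reflection/reversal symmetries among the four positions; position $2$ is where the distance can only go to $d+1$ (both non-inversion predecessors lie farther from $A$), producing the $2\,\Gamma_{k-1,r}(d+1,2)$ term, and likewise for position $3$. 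Finally the $d=0$ case is the same bookkeeping with the degeneration noted above.

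\textbf{Main obstacle.} The only real difficulty is purely bookkeeping: getting the position indices right under the orientation-reversal and geodesic-reflection symmetries, and not double-counting or missing the degenerate branch in the $d=0$ case. I would organize this by drawing the local picture at $m(B)$ for each of the four positions (this is what Figure 3 and the preceding figure are for) and reading off the three predecessors in each case; once the correspondence "predecessor wedge $\leftrightarrow$ (new distance, new position, inversion or not)" is tabulated, the four identities in $(6a)$ and the four in $(6b)$ are immediate. I would also record the base-case consistency check: plugging $k=2$ into $(6a)$–$(6b)$ and comparing with a direct enumeration of length-$2$ paths, to be sure the index conventions $\Gamma_{k,r}(d,j)=0$ for $r<0$, $d>k$, or $r>k$ are being applied correctly.
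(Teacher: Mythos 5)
Your proposal is correct and is essentially the paper's own argument: the paper likewise strips off the last step of a path, enumerates the three possible penultimate wedges $\vec{C}_{k-1}$ with $\vec{C}_{k-1}\rightarrow\vec{B}$ (one of which is $-\vec{B}$, accounting for the inversion term), and reads off the new distance and position type for each, verifying the first line of \rfb{6aeq} explicitly and leaving the remaining seven identities to the reader. Your treatment of the $d=0$ degeneration and the factor $2$ matches the intended bookkeeping.
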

\begin{proof}
Let us verify the first line of \rfb{6aeq}. 

Let $\gamma = \left(\vec{A},\vec{C}_0,...,\vec{C}_{k-1},\vec{B} \right)$ a path of length  $k$ with $r$ inversions and the orientation $\vec{A}, \vec{B}$ in position $1$ like in Figure $3$. There is  $3$ possibilities :
\begin{enumerate}
\item
If $\vec{C}_{k-1} = - \vec{B}$ then $\left(\vec{A},\vec{C}_0,...,\vec{C}_{k-1}\right)$ is a path of length $(k-1)$ with $(r-1)$ inversions. One has $dist (m(A),m(C_{k-1}) = d$ and the orientation of $(\vec{A},\vec{C}_{k-1})$ is of  type $2$.
\item
If $\vec{C}_{k-1} = \vec{B}_1$ and $dist(m(A),m(B_1)) = d-1$, then $(\vec{A},\vec{C}_0,...,\vec{C}_{k-1})$
is a path  of length $(k-1)$ with $r$ inversions; its orientation is of type $1$.
\item
If $\vec{C}_{k-1} = \vec{B}_2$ and $dist(m(A),m(B_2)) = d$, then $(\vec{A},\vec{C}_0,...,\vec{C}_{k-1})$ is a path 
of length $(k-1)$ with  $r$ inversions; its orientation is of  type $2$
\begin{center}
\includegraphics[scale=0.80]{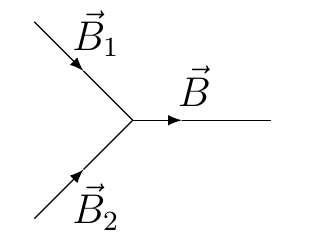} 
\end{center}
\end{enumerate}
Thus we get that the first line of \rfb{6aeq} holds true. The verification is the same for 
the others $7$ formulas of  \rfb{6aeq}, \rfb{6beq} and we leave the details to the reader.
\end{proof}

Let us define $\Gamma_{0,r} (d,j)$ by 
\be
\label{5biseq}
\Gamma_{0,0} (0,1) = \Gamma_{0,0} (0,3) =1, \;  \Gamma_{0,r} (d,j)=0 \  \hbox{in all other cases}.
\ee
Observe that this convention for   $k=0$ is compatible with  formulas \rfb{5eq}, \rfb{6aeq} and \rfb{6beq}. Then we define 
the generating functions    $\mathbb{F}_j, j \in \left\{1,2,3,4\right\}$ as the formal series in  $X,Y,Z$
\be
\label{7eq}
\mathbb{F}_j = \ds \sum_{k \geq 0 \\ d \geq 0, r \geq 0} \Gamma_{k,r} (d,j) X^k Y^r Z^d.
\ee
From formulas \rfb{5eq}, \rfb{5biseq}, \rfb{6aeq} and \rfb{6beq}, we deduce the system of equations:

\be
\label{8eq}
\left\{
\begin{array}{ll}
\mathbb{F}_1 = 1 + XZ\mathbb{F}_1 + XY\mathbb{F}_2 +X\left[\mathbb{F}_2 - {\mathbb{F}_2}_{|Z=0}\right] + 2X \partial_Z {\mathbb{F}_3}_{|Z=0} \\
\mathbb{F}_2 = XY\mathbb{F}_1 + \frac{2X}{Z} \left[\mathbb{F}_2 - {\mathbb{F}_2}_{|Z=0} \right]\\
\mathbb{F}_3 = 1 + XY\mathbb{F}_4 + \frac{2X}{Z} \left[\mathbb{F}_3 - {\mathbb{F}_3}_{|Z=0}\right]  \\
\mathbb{F}_4 =  XZ \mathbb{F}_4 + XY \mathbb{F}_3 + X \left[\mathbb{F}_3 - {\mathbb{F}_3}_{|Z=0}\right] + 2X \partial_Z {\mathbb{F}_2}_{|Z=0}. \\
\end{array}
\right.
\ee
The following Lemma is obvious.
\begin{lemme}\label{lem3}
Let $\Theta$ be the operator acting on formal series of $(X,Y,Z)$: 
$$\Theta (F) = F - \frac{2X}{Z} \left(F - F_{|Z=0}\right) \ .$$ 
Then $\Theta$ commutes with the multiplication by $X, Y$, one has $\Theta (1) = 1$ and the equation $\Theta \left(\ds \sum_{n=0}^\infty F_n(X,Y) Z^n \right) = \ds \sum_{n=0}^\infty G_n(X,Y) Z^n$ admits the unique solution $F$ : 
\be
\label{9eq}
F = \ds \sum_{n=0}^\infty F_n(X,Y) Z^n, \, F_n = \ds \sum_{p=0}^\infty (2X)^p G_{n+p}.
\ee
\end{lemme}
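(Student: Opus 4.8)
The plan is to read off all three assertions directly from the definition of $\Theta$, working throughout in the ring of formal power series in $X,Y,Z$ with its natural topology, in which an infinite sum is meaningful as soon as each monomial is produced by only finitely many terms. The first two claims are pure bookkeeping: the substitution $F\mapsto F_{|Z=0}$ is linear and satisfies $(XF)_{|Z=0}=X\,F_{|Z=0}$ and $(YF)_{|Z=0}=Y\,F_{|Z=0}$, while multiplication by $\frac{2X}{Z}$ plainly commutes with multiplication by $X$ and by $Y$; hence $\Theta$ commutes with multiplication by $X$ and $Y$. Also $1_{|Z=0}=1$, so $\Theta(1)=1-\frac{2X}{Z}(1-1)=1$.

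The substance is the interaction of $\Theta$ with the $Z$-grading. Writing $F=\sum_{n\ge0}F_n(X,Y)Z^n$, one has $F-F_{|Z=0}=\sum_{n\ge1}F_nZ^n$, hence $\frac{2X}{Z}\big(F-F_{|Z=0}\big)=\sum_{n\ge0}2XF_{n+1}Z^n$, so that
\[
\Theta(F)=\sum_{n\ge0}\big(F_n-2XF_{n+1}\big)Z^n .
\]
Consequently the equation $\Theta(F)=\sum_{n\ge0}G_nZ^n$ is equivalent to the infinite triangular system $F_n-2XF_{n+1}=G_n$, $n\ge0$. Iterating, $F_n=\sum_{p=0}^{P-1}(2X)^pG_{n+p}+(2X)^PF_{n+P}$ for every $P$; the remainder is divisible by $X^P$, hence tends to $0$, which forces $F_n=\sum_{p\ge0}(2X)^pG_{n+p}$ and, since the $p$-th term is divisible by $X^p$, shows that this series is a genuine element of the formal power series ring. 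Conversely this $F_n$ solves $F_n-2XF_{n+1}=G_n$, so a solution exists and equals \eqref{9eq}. For uniqueness, if $F$ and $F'$ both solve the equation then $H=F-F'$ satisfies $\Theta(H)=0$, i.e. $H_n=2XH_{n+1}$ for all $n$, whence $H_n$ is divisible by $X^P$ for every $P$ and thus vanishes.

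I expect no genuine obstacle: the only point deserving a word of care is the justification that the sums $\sum_{p}(2X)^pG_{n+p}$ and $\sum_n F_nZ^n$ are legitimate formal power series and that the iteration converges, which is immediate from the $X$-adic, respectively $(X,Z)$-adic, topology. This is precisely why the authors flag the lemma as obvious.
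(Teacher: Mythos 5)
Your proof is correct and is exactly the argument the authors have in mind when they declare the lemma ``obvious'' (the paper supplies no proof): reduce $\Theta$ to the triangular system $F_n-2XF_{n+1}=G_n$ on the $Z$-graded components and solve by iteration in the $X$-adic topology. Nothing to add.
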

From the lines $2$ and $3$ of \rfb{8eq} one has 
\be
\label{10eq}
\begin{array}{c}
\mathbb{F}_2 = \Theta^{-1} (XY\mathbb{F}_1) = XY \Theta^{-1} (\mathbb{F}_1) \\
\mathbb{F}_3 = \Theta^{-1} (1 + XY\mathbb{F}_4) = 1 + XY \Theta^{-1} (\mathbb{F}_4).
\end{array}
\ee
Let us define the operators   $\mathbb{P}$ and $\mathbb{Q}$:
$$
\mathbb{P}(F) = F - XZF - X^2 \left[ Y^2 + Y \right] \Theta^{-1} (F) + X^2Y \Theta^{-1} (F)_{|Z=0}
$$
$$
\mathbb{Q} (F) = 2 X^2 Y \frac{\partial }{\partial Z} \Theta^{-1} (F)_{|Z=0} \ .
$$
From \eqref{8eq} we get the following Lemma.
\begin{lemme}\label{lem4}
$\mathbb{F}_1, \mathbb{F}_4$ satisfy the system: 
\be
\label{11eq}
\left[
\begin{array}{cc}
\mathbb{P} & - \mathbb{Q} \\
- \mathbb{Q} & \mathbb{P}
\end{array}
\right] \left[ \begin{array}{c} \mathbb{F}_1 \\ \mathbb{F}_4 \end{array} \right] = \left[ \begin{array}{c} 1 \\ XY \end{array} \right].
\ee
\end{lemme}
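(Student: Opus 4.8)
The goal is to derive the matrix system \eqref{11eq} directly from \eqref{8eq} by eliminating $\mathbb{F}_2$ and $\mathbb{F}_3$. The plan is to first rewrite lines $2$ and $3$ of \eqref{8eq} in terms of the operator $\Theta$ of Lemma \ref{lem3}: indeed line $2$ says exactly $\Theta(\mathbb{F}_2) = XY\mathbb{F}_1$ and line $3$ says $\Theta(\mathbb{F}_3) = 1 + XY\mathbb{F}_4$, since $\Theta(F) = F - \frac{2X}{Z}(F - F_{|Z=0})$ and the right-hand sides have no pole at $Z=0$. Applying the inverse $\Theta^{-1}$ furnished by \eqref{9eq}, and using that $\Theta^{-1}$ commutes with multiplication by $X$ and $Y$ and fixes $1$, gives precisely \eqref{10eq}: $\mathbb{F}_2 = XY\,\Theta^{-1}(\mathbb{F}_1)$ and $\mathbb{F}_3 = 1 + XY\,\Theta^{-1}(\mathbb{F}_4)$.

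Next I would substitute these expressions for $\mathbb{F}_2$ and $\mathbb{F}_3$ into lines $1$ and $4$ of \eqref{8eq}. In line $1$, the terms $XY\mathbb{F}_2$ and $X[\mathbb{F}_2 - {\mathbb{F}_2}_{|Z=0}]$ become $X^2Y^2\,\Theta^{-1}(\mathbb{F}_1)$ and $X^2Y\,[\Theta^{-1}(\mathbb{F}_1) - \Theta^{-1}(\mathbb{F}_1)_{|Z=0}]$ respectively; collecting them yields $X^2(Y^2+Y)\Theta^{-1}(\mathbb{F}_1) - X^2Y\,\Theta^{-1}(\mathbb{F}_1)_{|Z=0}$, which is exactly $\mathbb{F}_1 - \mathbb{P}(\mathbb{F}_1)$. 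The remaining term $2X\,\partial_Z {\mathbb{F}_3}_{|Z=0}$ becomes, using $\mathbb{F}_3 = 1 + XY\Theta^{-1}(\mathbb{F}_4)$ and $\partial_Z(1)=0$, equal to $2X^2Y\,\partial_Z\Theta^{-1}(\mathbb{F}_4)_{|Z=0} = \mathbb{Q}(\mathbb{F}_4)$. Thus line $1$ reads $\mathbb{F}_1 = XZ\mathbb{F}_1 + [\mathbb{F}_1 - \mathbb{P}(\mathbb{F}_1)] + \mathbb{Q}(\mathbb{F}_4) - (\text{the }XZ\text{ term already absorbed})$; after cancelling $\mathbb{F}_1$ and rearranging, $\mathbb{P}(\mathbb{F}_1) - \mathbb{Q}(\mathbb{F}_4) = 1$, since the free term $1$ on the right of line $1$ and the definition of $\mathbb{P}$ (which contains the $-XZF$ piece) account for everything.

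By the symmetry of the system — lines $4$ and $1$ of \eqref{8eq} are interchanged under $1 \leftrightarrow 4$, $2 \leftrightarrow 3$, with the free term $1$ moving from line $1$'s $\mathbb{F}_1$-equation to line $3$'s $\mathbb{F}_3$-equation — the same computation applied to line $4$ gives $\mathbb{P}(\mathbb{F}_4) - \mathbb{Q}(\mathbb{F}_1) = XY$. Here the right-hand side is $XY$ rather than $1$ because the constant term $1$ that fed into $\mathbb{F}_1$'s equation enters $\mathbb{F}_4$'s equation only through $\mathbb{F}_3 = 1 + XY\Theta^{-1}(\mathbb{F}_4)$, producing a factor $XY$ after one more application within the $X[\mathbb{F}_3 - {\mathbb{F}_3}_{|Z=0}]$ and $XY\mathbb{F}_3$ terms — so one must track that the ``$1$'' in line $3$ contributes an $XY\cdot 1$ to line $4$. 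Writing the two scalar identities $\mathbb{P}(\mathbb{F}_1) - \mathbb{Q}(\mathbb{F}_4) = 1$ and $-\mathbb{Q}(\mathbb{F}_1) + \mathbb{P}(\mathbb{F}_4) = XY$ in matrix form is exactly \eqref{11eq}.

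The only genuinely delicate point is the careful bookkeeping of the inhomogeneous terms: one must check that, after substituting \eqref{10eq}, the constant $1$ in line $1$ survives intact on the right while the constant $1$ hidden in $\mathbb{F}_3$ produces the $XY$ on the right of the second equation, and that the operator $\mathbb{P}$ has been defined so as to absorb precisely the $-XZ$ self-interaction together with the $\Theta^{-1}$ contributions of $\mathbb{F}_2$ (resp. $\mathbb{F}_3$). All of this is purely formal manipulation of power series in $X,Y,Z$, justified term-by-term by Lemma \ref{lem3}, so there is no analytic obstacle; the work is simply organizing the substitution cleanly.
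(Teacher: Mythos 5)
Your proposal is correct and follows exactly the route the paper intends (the paper simply asserts the lemma follows from \eqref{8eq} without writing out the substitution): plug \eqref{10eq} into lines $1$ and $4$ of \eqref{8eq}, recognize $XY\mathbb{F}_2 + X[\mathbb{F}_2-{\mathbb{F}_2}_{|Z=0}]$ as $\mathbb{F}_1 - XZ\mathbb{F}_1-\mathbb{P}(\mathbb{F}_1)$ and $2X\partial_Z{\mathbb{F}_3}_{|Z=0}$ as $\mathbb{Q}(\mathbb{F}_4)$, and symmetrically for line $4$. The only cosmetic imprecision is in your tracking of the inhomogeneous term in the second equation: the constant $1$ hidden in $\mathbb{F}_3$ contributes the $XY$ solely through the term $XY\mathbb{F}_3$, since $X[\mathbb{F}_3-{\mathbb{F}_3}_{|Z=0}]$ annihilates anything independent of $Z$ — but this does not affect the (correct) conclusion.
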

From \rfb{4eq}, when the orientation of  $(\vec{A},\vec{B})$ is of type $j \in \left\{1,2,3,4\right\},$ and
$dist (m(\vec{A}),m(\vec{B})) = d,$ one has 
\be
\label{12eq}
G(k,\vec{B},\vec{A}) = \left(\frac{2}{3} \right)^k \, \frac{1}{k !} \, \partial^k_X \frac{1}{d!} \partial^d_Z {\mathbb{F}_j}_{|X=Z=0,Y= - \half}.
\ee
Observe that $Y$ is just  a parameter in the  system \rfb{11eq}. Since we are in the case  $Y = \frac{\alpha}{\beta} = - \half,$ 
we can forget the variable $Y$ by setting $Y = - \half$; then $\mathbb{F}_1, \mathbb{F}_4$
are formal series  in $X, Z$ and they satisfy: 
\be
\label{13eq}
\left\{
\begin{array}{ll}
\left[
\begin{array}{cc}
\mathbb{P} & - \mathbb{Q} \\
- \mathbb{Q} & \mathbb{P}
\end{array}
\right] \left[ \begin{array}{c} \mathbb{F}_1 \\ \mathbb{F}_4 \end{array} \right] = \left[ \begin{array}{c} 1 \\ - \frac{X}{2} \end{array} \right] \\
\mathbb{P} = 1 - XZ + \frac{X^2}{4} \Theta^{-1} - \frac{X^2}{2} \Theta^{-1} (.)_{|Z=0} \\
\mathbb{Q} = - X^2 \partial_Z \Theta^{-1} (.)_{|Z=0}.
\end{array}
\right.
\ee
We are thus reduce to solve the system \eqref{13eq}. From \rfb{10eq}, we will get that   $\mathbb{F}_2$ and $\mathbb{F}_3$ are defined by 
\be
\label{10eqbis}
\begin{array}{c}
\mathbb{F}_2 = -\frac{X}{2} \Theta^{-1} (\mathbb{F}_1) \\
\mathbb{F}_3  = 1  -\frac{X}{2} \Theta^{-1} (\mathbb{F}_4).
\end{array}
\ee
  . 

\noindent
\underline{{\bf Resolution of the  system \rfb{13eq}}} \\ 

Using the change of variables $X = \sqrt{2} \, x, \,  Z = \sqrt{2} \, z$, \rfb{13eq} becomes
\be
\label{13biseq}
\left\{
\begin{array}{ll}
\left[
\begin{array}{cc}
\tilde{\mathbb{P}} & - \tilde{\mathbb{Q}} \\
- \tilde{\mathbb{Q}} & \tilde{\mathbb{P}}
\end{array}
\right] \left[ \begin{array}{c} \tilde{\mathbb{F}}_1 \\ \tilde{\mathbb{F}}_4 \end{array} \right] = \left[ \begin{array}{c} 1 \\ - \frac{x}{\sqrt{2}} \end{array} \right] \\
\tilde{\mathbb{P}} = 1 - 2xz + \frac{x^2}{2} \Theta^{-1} - x^2 {\Theta^{-1}}_{|z=0} \\
\tilde{\mathbb{Q}} = - \sqrt{2} \, x^2 \partial_z {\Theta^{-1}}_{|z=0}
\end{array}
\right.
\ee
with $\Theta (f) = f - \frac{2x}{z} \, \left( f - f_{|z=0} \right).$ 
With the new unknowns $\tilde{\mathbb{F}}_1 = \Theta (f_1), \tilde{\mathbb{F}}_4 = \Theta (f_4),$  \rfb{13biseq} reads,
with $r = \frac{\sqrt{2}}{3}, \mu = \mu(x) = \frac{2x}{1 + \frac{x^2}{r^2}}, \delta = \delta(x) = \frac{x^2}{1 + \frac{x^2}{r^2}} = \frac{x}{2} \mu$: 

\be
\label{13tereq}
\left\{
\begin{array}{ll}
\left[
\begin{array}{cc}
p & - q \\
- q & p
\end{array}
\right] \left[ \begin{array}{c} f_1 \\ f_4 \end{array} \right] = \frac{1}{1 + \frac{9x^2}{2}}\left[ \begin{array}{c} 1 \\ - \frac{x}{\sqrt{2}} \end{array} \right] \\
p(f) = \left[1 - \mu z - \frac{\mu}{z} \right] f+ \left(\frac{\mu}{z} - 5 \delta \right)f_{|z=0} \\
q(f) = - \sqrt{2} \, \delta \partial_z f_{|z=0}.
\end{array}
\right.
\ee
The system \rfb{13tereq} is an equation on fonctions of $z$, with $x$ as a holomorphic parameter in the complex disc
 $\left\{x \in \mathbb{C}, \, |x| < r = \frac{\sqrt{2}}{3} \right\}.$ For $x = 0$ one has $p = Id$ and $q = 0$.

\begin{definition}
Let $U$ be the open subset of $\mathbb{C}$ 
$$
U = \mathbb{C} \setminus \left\{\mu \in \mathbb{C}, \, \mu^2 \in [\frac{1}{4}, + \infty[ \right\}.
$$
For $\mu \in U$ we define the holomorphic function $z_{-} (\mu)$ by 
$$
z_{-}(\mu) = \frac{1}{2 \mu} \, \left[ 1 - \sqrt{1- 4 \mu^2} \right],
$$
where $\sqrt{1 - 4\mu^2}$ is the principal determination of the square root of  $1 - 4\mu^2$ 
on  $U$. One has 
$$
z_{-} (0) = 0, \quad z'_{-} (0) = 1, \quad z^2_{-} - \frac{z_{-}}{\mu} + 1 = 0 \ .
$$
We also define $z_{+} (\mu) = \frac{1}{z_{-}(\mu)} =  \frac{1}{2 \mu} \,\left[ 1 + \sqrt{1 - 4\mu^2} \right]$. Then  $z_{+} (\mu)$
is meromorphic on $U$ with a  simple pole at  $\mu = 0.$
\end{definition}

Let $V$ be the open subset of  $\mathbb{C}$
$$
V = \mathbb{C} \setminus \left\{\mu \in \mathbb{C}, \, \mu^2 \in [r^2, +\infty[ \right\}.
$$
From $\frac{\sqrt{2}}{3} = r < \half$ one has  $V \subset U$. We denote by  $\theta_0$ the unique solution of 
$$
\cos \theta_0 = 2r, \, \theta_0 \in ]0, \frac{\pi}{2}[.
$$
\begin{lemme} \label{lemme5}
The holomorphic function $x \longmapsto \mu (x) = \frac{2x}{1 + \frac{x^2}{r^2}}$ is a  bijection  from
$D_r = \left\{x, \, |x| < r\right\}$ onto  $V$, and  its restriction on the boundary $\partial D_r\setminus\{\pm ir\}$ satisfies
$$
\mu \left( \left\{ re^{i \theta}, \, - \pi/2 < \theta < \pi/2 \right\} \right) = [r,+\infty[, \, 
\mu \left( \left\{ re^{i \theta}, \, \pi/2 < \theta < 3\pi/2 \right\} \right) = [- \infty, -r].
$$
\end{lemme}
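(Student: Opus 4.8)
The plan is to reduce the statement to the classical Joukowski map by a linear change of variable and then to read off injectivity, surjectivity and the boundary behaviour directly from the defining quadratic relation. First I would set $x = r w$, $\mu = r \nu$; then $\mu = \frac{2x}{1 + x^{2}/r^{2}}$ turns into $\nu = \frac{2w}{1+w^{2}}$, which is the reciprocal of the Joukowski map $w \mapsto \frac{1}{2}(w + w^{-1})$, now regarded as a holomorphic function on the unit disc $\mathbb{D} = \{w \in \mathbb{C} : |w| < 1\}$. Since $\mu^{2}$ is real and $\geq r^{2}$ precisely when $\mu$ is real with $|\mu| \geq r$, one has $\{\mu \in \mathbb{C}: \mu^{2} \in [r^{2},+\infty[\} = \,]-\infty,-r]\cup[r,+\infty[\,$, which equals $r\cdot(\,]-\infty,-1]\cup[1,+\infty[\,)$; hence under the rescaling $V$ becomes $\mathbb{C}\setminus(\,]-\infty,-1]\cup[1,+\infty[\,)$, and it suffices to prove that $w\mapsto\nu(w)$ is a bijection of $\mathbb{D}$ onto $\mathbb{C}\setminus(\,]-\infty,-1]\cup[1,+\infty[\,)$ together with the asserted boundary identities.

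For injectivity: if $\nu(w_{1})=\nu(w_{2})$ with $w_{1},w_{2}\in\mathbb{D}$, clearing denominators yields $(w_{1}-w_{2})(1-w_{1}w_{2})=0$, and $|w_{1}w_{2}|<1$ forces $w_{1}=w_{2}$. Being holomorphic and injective, $\nu$ is then automatically open with nowhere vanishing derivative on $\mathbb{D}$. For the boundary: a point $w=e^{i\theta}$ gives $\nu(e^{i\theta})=\frac{2e^{i\theta}}{1+e^{2i\theta}}=\frac{2e^{i\theta}}{2\cos\theta\,e^{i\theta}}=\frac{1}{\cos\theta}$, a real number of modulus $\geq1$, and every such number is attained. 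Translating back, $\mu(re^{i\theta})=\frac{r}{\cos\theta}$, which runs over $[r,+\infty[$ as $\theta$ ranges over $]-\pi/2,\pi/2[$ and over $]-\infty,-r]$ as $\theta$ ranges over $]\pi/2,3\pi/2[$, while $\theta=\pm\pi/2$ are exactly the two poles $\pm ir$ of $\mu$; this is the second assertion of the Lemma, and it also shows $\nu(\mathbb{D})\subseteq\mathbb{C}\setminus(\,]-\infty,-1]\cup[1,+\infty[\,)$.

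For surjectivity onto this set, fix $\nu_{0}\in\mathbb{C}\setminus(\,]-\infty,-1]\cup[1,+\infty[\,)$. If $\nu_{0}=0$, then $\nu(w)=\nu_{0}$ has the single root $w=0\in\mathbb{D}$. If $\nu_{0}\neq0$, then $\nu(w)=\nu_{0}$ is the quadratic $\nu_{0}w^{2}-2w+\nu_{0}=0$, whose two roots have product $1$; they are distinct (a double root would force $\nu_{0}=\pm1$), and by the boundary computation neither root can lie on $\partial\mathbb{D}$ (that would force $\nu_{0}$ to be real with $|\nu_{0}|\geq1$). Hence the two roots have moduli $\rho$ and $1/\rho$ with $\rho\neq1$, so exactly one of them lies in $\mathbb{D}$, giving $\nu_{0}$ a unique preimage there. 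Combined with injectivity and the open mapping property, $\nu$ is a biholomorphism of $\mathbb{D}$ onto $\mathbb{C}\setminus(\,]-\infty,-1]\cup[1,+\infty[\,)$, and undoing $x=rw$, $\mu=r\nu$ yields the bijection $\mu:D_{r}\to V$.

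The only point requiring genuine care is the claim that a solution $w$ of $\nu_{0}w^{2}-2w+\nu_{0}=0$ never lies on the unit circle when $\nu_{0}$ belongs to the rescaled target set; this is exactly where the precise shape of the deleted slit $\{\mu^{2}\in[r^{2},+\infty[\}$ is used, and it is pinned down by the boundary computation $\nu(e^{i\theta})=1/\cos\theta$. Everything else --- the injectivity identity, the identification of the slit after rescaling, the product-of-roots argument --- is routine.
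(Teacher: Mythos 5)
Your proof is correct and follows essentially the same route as the paper's: after the (cosmetic) rescaling to the unit disc, your key step is that the two roots of the quadratic $\mu(x)=\mu_0$ have product $r^2$ and both lie on the circle $|x|=r$ exactly when $\mu_0$ is real with $|\mu_0|\geq r$, which is precisely the paper's argument, and the boundary identity $\mu(re^{i\theta})=r/\cos\theta$ is the same computation. The extra injectivity identity $(w_1-w_2)(1-w_1w_2)=0$ is a harmless redundancy.
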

\begin{proof}
For $\mu\not=0$, the equation  $\mu(x) = \mu  \, \Leftrightarrow \, x^2 - 2x \, \frac{r^2}{\mu} + r^2 = 0$ admits $2$ solutions $x_{\pm} (\mu)$ with  $x_+ x_- = r^2.$ Thus,  when  $|x_+ | \neq |x_-|,$ there is a unique  unique solution in  $D_r$. 
In the case $|x_+| = |x_-| = r$ one has $x_+ = r e^{i \varphi}, x_- = r e^{-i\varphi}$ and $x_+ + x_- = 2r \cos \varphi = \frac{2r^2}{\mu}$,  thus $\mu \in \mathbb{R}$ et $|\mu| = \left|\frac{r}{\cos \varphi}\right| \geq r.$ Since $\mu (x) = 0 \, \Leftrightarrow \, x = 0$, this implies that $x \longmapsto \mu (x)$ is a holomorphic bijection from $D_r$ onto $V$. 
The last two equalities follow from  
$$
\mu (r e^{i \theta}) = \frac{2r e^{i\theta}}{1 + e^{2i \theta}} = \frac{r}{\cos \theta} \ .
$$

\end{proof}
\begin{center}
\includegraphics[scale=0.80]{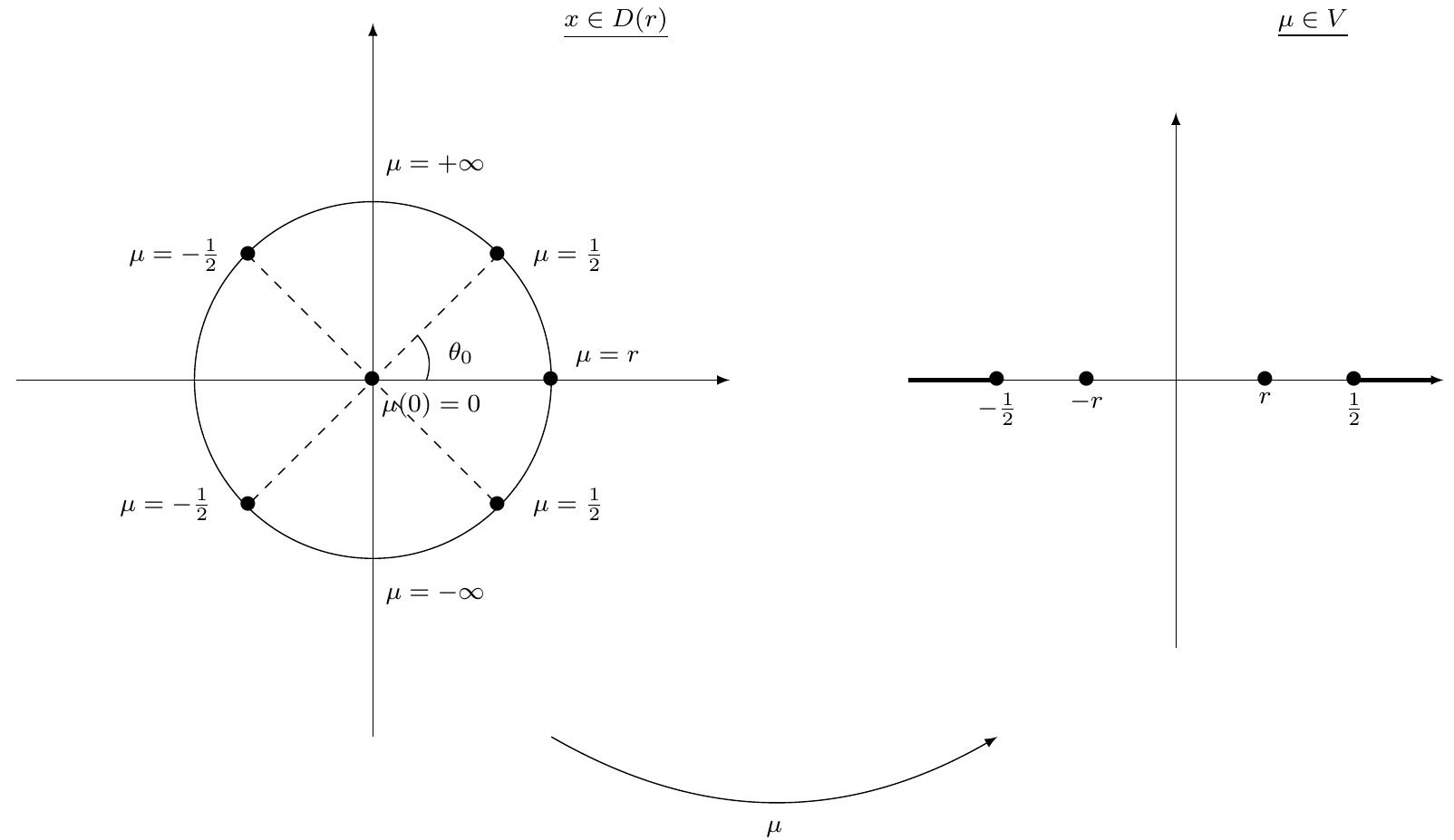} 
\captionof{figure}{} 
\end{center}

\begin{lemme} \label{lemme6}
For all $\mu \in U= \mathbb{C} \setminus \left\{\mu \in \mathbb{C}, \, \mu^2 \in [\frac{1}{4}, + \infty[ \right\}$ one has $\left|z_- (\mu)\right| <1.$
\end{lemme}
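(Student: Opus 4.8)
The plan is to read off from the definition the algebraic fact that $z_-(\mu)$ and $z_+(\mu)=1/z_-(\mu)$ are the two roots of the quadratic $w^2 - w/\mu + 1 = 0$, whose product is $1$. Hence $|z_-(\mu)|\,|z_+(\mu)| = 1$ for every $\mu\in U$, so on $U$ we always have one of the three possibilities $|z_-(\mu)|<1$, $|z_-(\mu)|=1$, $|z_-(\mu)|>1$, the last being equivalent to $|z_+(\mu)|<1$ by the symmetry $z_+\leftrightarrow z_-$. It therefore suffices to rule out $|z_-(\mu)|=1$ on $U$ and then invoke a connectedness argument, using the known value $z_-(0)=0$.

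First I would rule out the case of a root on the unit circle. Suppose $w$ is a root of $w^2 - w/\mu + 1 = 0$ with $|w|=1$ (note $\mu\neq 0$ here, since the equation degenerates at $\mu=0$). Dividing by $w$ gives $1/\mu = w + 1/w = 2\,\mathrm{Re}\,w \in [-2,2]$; in fact $1/\mu\neq 0$, so $1/\mu\in[-2,2]\setminus\{0\}$, which forces $\mu$ to be real with $|\mu|\geq 1/2$, i.e. $\mu^2\in[1/4,+\infty[$. Such $\mu$ are excluded from $U$. Consequently, for every $\mu\in U$ neither root of the quadratic lies on the unit circle; in particular $|z_-(\mu)|\neq 1$ for all $\mu\in U$, and exactly one of $|z_-(\mu)|,|z_+(\mu)|$ is $<1$.

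It then remains to observe that $\mu\mapsto|z_-(\mu)|$ is continuous on $U$ (because $z_-$ is holomorphic there) and that $U$ is connected: the excluded set $\{\mu:\mu^2\in[1/4,+\infty[\}$ is precisely the pair of real half-lines $]-\infty,-1/2]\cup[1/2,+\infty[$ (any $\mu$ with $\mu^2$ real and $\geq 1/4$ is itself real with $|\mu|\geq 1/2$), and removing these two segments of $\mathbb R$ from $\mathbb C$ leaves the open segment $]-1/2,1/2[$ joining the upper and lower parts, so $U$ is connected. A continuous real-valued function on a connected set that never equals $1$ is either everywhere $<1$ or everywhere $>1$; since $z_-(0)=0$ and $0\in U$ (as $0\notin[1/4,+\infty[$), the value at $\mu=0$ is $<1$, hence $|z_-(\mu)|<1$ for all $\mu\in U$. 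There is no real obstacle here: the only points requiring (routine) care are the identification of the excluded set with the two real rays, the connectedness of $U$, and the fact that $z_-$, defined via the principal branch of $\sqrt{1-4\mu^2}$, is indeed the root vanishing at $\mu=0$ — all immediate from the stated normalization $z_-(0)=0$, $z_-'(0)=1$.
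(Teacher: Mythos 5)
Your proposal is correct and follows essentially the same route as the paper: both use that $z_-z_+=1$, show that a root of $w^2-w/\mu+1=0$ on the unit circle forces $\mu$ real with $|\mu|\geq 1/2$ (hence $\mu\notin U$), and conclude from $z_-(0)=0$ by continuity on the connected set $U$. Your version merely makes the connectedness step explicit where the paper leaves it implicit.
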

\begin{proof}
One has $z_- (0) = 0$ and for  $\mu \neq 0,$ $z_-(\mu)$ and  $z_+(\mu) = \frac{1}{z_-(\mu)}$ satisfy the equation  $z^2 - \frac{z}{\mu} + 1  = 0.$ 
Thus for $\mu$ near $0$ one has $|z_- (\mu)| < 1$ and if there  exists $\mu \in U$ such that   $|z_- (\mu)| = 1$ one has also
 $|z_+ (\mu)| =1$, thus $z_- (\mu) = e^{i \varphi}, z_+ (\mu) = e^{-i\varphi}$; then  
 $z_+ + z_- = \frac{1}{\mu}$ gives $\cos \varphi = \frac{1}{2 \mu}$, hence  $\mu \in \mathbb{R}$ and  $|2 \mu| \geq 1$: this contradicts  $\mu \in U$. 
\end{proof}

\begin{lemme} \label{lemme7}
Let $x \in \mathbb C$ close to $0$. Let $p,q$ be the operators defined in \rfb{13tereq}.
 For $(\alpha,\beta) \in \mathbb{C}^2$, the  system 
\be
\label{13quadeq}
\left\{
\begin{array}{ll}
p(f) - q(g) = \alpha \\
- q(f) + p(g) = \beta
\end{array}
\right.
\ee
admits  a unique solution $$(f,g) = \left( \ds \sum_{n=0}^\infty f_n z^n, \ds \sum_{n=0}^\infty g_n z^n \right)$$
 holomorphic in $|z| < 1$ and  this solution is given by
\be
\label{16eq}
f = \frac{az_-(\mu)}{\mu} \, \frac{1}{1 - zz_-(\mu)}, \, g = \frac{bz_-(\mu)}{\mu} \, \frac{1}{1-zz_-(\mu)}
\ee
where the  couple $(a,b)\in \mathbb{C}^2$ is solution of  the equation 
\be
\label{17eq}
\left[
\begin{array}{cc}
\left( 1 - 5 \delta \frac{z_-}{\mu} \right) & \sqrt{2} \delta \frac{z_-^2}{\mu} \\
\frac{\sqrt{2} \delta z_-^2}{\mu} & \left(1 - 5 \frac{\delta z_-}{\mu} \right)
\end{array}
\right]
\left[
\begin{array}{c}
a \\ b
\end{array}
\right] = \left[
\begin{array}{c}
\alpha \\ \beta
\end{array} \right].
\ee
\end{lemme}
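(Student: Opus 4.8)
The plan is to reduce \eqref{13quadeq} to a question about Taylor coefficients in $z$, exploiting the fact that the operator $q$ only involves the first Taylor coefficient. Writing $f=\sum_{n\ge0}f_nz^n$, the quantity $q(f)=-\sqrt2\,\delta f_1$ is a constant in $z$, and similarly $q(g)=-\sqrt2\,\delta g_1$; hence \eqref{13quadeq} is equivalent to $p(f)=\alpha+q(g)$ and $p(g)=\beta+q(f)$, i.e.\ to two scalar equations of the form $p(h)=C$ with $C\in\mathbb C$ constant, coupled only through the numbers $f_1$ and $g_1$. So the heart of the matter is to describe all functions $h$ holomorphic on $|z|<1$ with $p(h)=C$.

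To do this I would expand $p(h)$ in powers of $z$. Its only possibly singular term at $z=0$ is $-\frac{\mu}{z}\bigl(h-h_{|z=0}\bigr)$, which is in fact holomorphic because $h-h_{|z=0}$ vanishes at $0$; so $p$ preserves holomorphy. Expanding, the $z^0$-coefficient of $p(h)$ is $(1-5\delta)h_0-\mu h_1$, and for $n\ge1$ the $z^n$-coefficient is $h_n-\mu h_{n-1}-\mu h_{n+1}$. Thus $p(h)=C$ is equivalent to the scalar relation $(1-5\delta)h_0-\mu h_1=C$ together with the three-term recurrence $\mu h_{n+1}-h_n+\mu h_{n-1}=0$, $n\ge1$. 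Its characteristic equation is precisely $t^2-t/\mu+1=0$, with roots $z_-(\mu)$ and $z_+(\mu)=1/z_-(\mu)$; by Lemmas \ref{lemme5} and \ref{lemme6} one has $|z_-(\mu)|<1<|z_+(\mu)|$ for $x$ near $0$, $x\ne0$. The general solution is $h_n=Az_-^n+Bz_+^n$, and since $|z_+(\mu)|>|z_-(\mu)|$ one gets $\limsup_n|h_n|^{1/n}=|z_+(\mu)|>1$ unless $B=0$; since a function holomorphic on $|z|<1$ has a Taylor series converging there, $B=0$ is forced. Hence $h_1=h_0z_-(\mu)$ and $h=h_0/(1-zz_-(\mu))$, which does converge on $|z|<1$ as $|z_-(\mu)|<1$. (At $x=0$ we have $\mu=0$, $p=\mathrm{Id}$, $q=0$ and the claim is trivial; note $z_-(\mu)/\mu$ extends holomorphically through $x=0$ with value $1$, so the formulas below also make sense there.)

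Now I would assemble the two scalar equations. By the above, every holomorphic solution $(f,g)$ of \eqref{13quadeq} must be of the form \eqref{16eq} with $a:=\mu f_{|z=0}/z_-(\mu)$ and $b:=\mu g_{|z=0}/z_-(\mu)$; conversely, for any $(a,b)\in\mathbb C^2$ the functions $(f,g)$ defined by \eqref{16eq} are holomorphic on $|z|<1$, automatically satisfy the recurrences, and solve \eqref{13quadeq} if and only if the $z^0$-coefficients agree. Computing those $z^0$-coefficients, using $z_-/\mu=z_-^2+1$ (so that $(1-5\delta)\tfrac{az_-}{\mu}-az_-^2=a(1-5\delta z_-/\mu)$) together with $-q(g)=\sqrt2\,\delta\,\tfrac{bz_-^2}{\mu}$ and the symmetric identities for the second equation, turns the two matching conditions into exactly the linear system \eqref{17eq}. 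Hence holomorphic solutions of \eqref{13quadeq} correspond bijectively to solutions $(a,b)$ of \eqref{17eq}. Finally, for $x$ close to $0$ one has $\delta=\tfrac{x}{2}\mu=O(x^2)\to0$, $z_-(\mu)\to0$ and $z_-(\mu)/\mu\to1$, so the matrix in \eqref{17eq} tends to the identity and is invertible; this yields a unique $(a,b)$, hence a unique $(f,g)$, given by \eqref{16eq}.

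The one genuinely laborious point is the algebraic identification of the two $z^0$-coefficients with the entries of the matrix in \eqref{17eq}, where the relation $z_-^2-z_-/\mu+1=0$ is used to bring the diagonal entry to the stated form $1-5\delta z_-/\mu$. Everything else — the reduction to $p(h)=C$, the analysis of the recurrence via its characteristic roots, the convergence argument forcing $B=0$, and the near-identity (hence invertibility) of the $2\times2$ matrix for small $x$ — is routine; a little care is only needed for the convergence statement and for the benign degeneracy $\mu=0$ at $x=0$.
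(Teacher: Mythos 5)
Your proof is correct and follows essentially the same route as the paper's: the paper multiplies $p(f)-q(g)=\alpha$ by $z$ to obtain the functional equation $\left(z-\mu(z^2+1)\right)f=az-c$ and factors $z-\mu(z^2+1)=-\mu(z-z_-)(z-z_+)$, which is exactly the generating-function form of your three-term recurrence with characteristic roots $z_\pm$. In both arguments the growing root $z_+$ is discarded by holomorphy on $|z|<1$ (the paper evaluates at $z=z_-$ to force $c=az_-$, you force $B=0$ via the radius of convergence), and the remaining steps --- matching the $z^0$-coefficient using $z_-^2-\frac{z_-}{\mu}+1=0$ to arrive at \eqref{17eq}, and inverting the near-identity matrix for small $x$ --- coincide with the paper's.
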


\begin{proof} 
When $x=0$, one has $\mu(x)=\mu=0$, $z_- (\mu) = 0$, $\frac{z_- (\mu)}{\mu}=1$, $\delta=0$
and  $p=Id,q=0$. Then the result is obvious. We may thus assume $x\not=0$, hence $\mu\not=0$.
We multiply \rfb{13quadeq} by $z$ and we get 
$$
\left(z - \mu (z^2 + 1)\right) f = az - c, \, a = \alpha + 5 \delta f_0 - \sqrt{2} \delta g_1, \, c = \mu f_0,
$$
$$
\left(z - \mu (z^2 + 1) \right) g = bz - d, \, b = \beta + 5 \delta g_0 - \sqrt{2} \delta f_1, \, d =  \mu g_0.
$$
since $z - \mu (z^2 + 1) = - \mu (z - z_- )(z - z_+)$, we get  $c = az_-$ et $d = bz_-$. Thus we find 
$$
f = \frac{-a}{\mu (z - z_+)} = \frac{a}{\mu (z_+ - z)} = \frac{a}{\mu z_+ (1 - \frac{z}{z_+})} = \frac{az_-}{\mu} \, \frac{1}{1 - zz_-}
$$
and  $g = \frac{bz_-}{\mu} \, \frac{1}{1 - zz_-}.$ Therefore \rfb{16eq} holds true, and by evaluation at $z=0$ we get
 
$$
f_0 = \frac{az_-}{\mu}, \, g_0 = \frac{bz_-}{\mu}, \, f_1 = \frac{az_{-}^2}{\mu}, \, g_1 = \frac{bz_{-}^2}{\mu} \ .
$$
Thus we get that the unknowns $(a,b)$  satisfy the system 
$$
a = \alpha + 5 \delta f_0 - \sqrt{2} \delta g_1 = \alpha + 5 \delta \frac{az_-}{\mu} - \sqrt{2} \delta \frac{bz_{-}^2}{\mu}
$$
$$
b = \beta + 5 \delta g_0 - \sqrt{2} \delta f_1 = \beta + 5 \delta \frac{bz_-}{\mu} - \sqrt{2} \delta \frac{az_{-}^2}{\mu}
$$
which is equivalent to \rfb{17eq}. Finally, since $\vert x \vert$ is small,   the invertibility of the matrix 
\eqref{17eq}  is obvious. The proof of Lemma \ref{lemme7} is complete.
\end{proof}

Recall $\delta = \frac{x}{2} \mu(x), \, \mu = \mu(x) = \frac{2x}{1 + \frac{x^2}{r^2}}$. We denote by
$\left( A(x),B(x) \right) = (a,b)$ the  solution of  \rfb{17eq} with right hand side  associated to the equation \eqref{13tereq}
i.e.,
$$
\left(\alpha,\beta\right) = \left( \frac{1}{1 + \frac{x^2}{r^2}}, \frac{-x}{\sqrt{2} \, \left( 1 + \frac{x^2}{r^2}\right)} \right).
$$
Then $A(x)$ and $B(x)$ are well defined holomorphic functions near  $x=0$. 
The  resolution of system \rfb{17eq} leads to (with $z_- = z_- (\mu (x))$ )
\be\label{21eqbis}
\begin{aligned}
A(x) &= \frac{1}{1 + \frac{x^2}{r^2}} \, \frac{1}{\left(1 - \frac{5}{2} xz_- \right)^2 - \frac{x^2}{2} z^4_-} \, 
\left[ 1 - \frac{5}{2} xz_-  + \frac{x^2}{2} z^2_- \right]\\
B(x) &= -\frac{x}{\sqrt 2}\, \frac{1}{1 + \frac{x^2}{r^2}} \, \frac{1}{\left(1 - \frac{5}{2} xz_- \right)^2 - \frac{x^2}{2} z^4_-} \, 
\left[ 1 - \frac{5}{2} xz_-  +  z^2_- \right].\\
\end{aligned}
\ee
Let us define the functions  $g_j (x), j\in \{1,2,3,4\}$,  holomorphic  near  $x=0$, by the formulas
$$g_1 (x) =A(x)\,  \frac{z_- (\mu(x))  (1 - 2xz_- (\mu(x)))}{\mu (x)}, \quad g_4 (x) = B(x)\, \frac{ z_- (\mu(x)) (1 - 2xz_- (\mu(x)))}{\mu (x)}, $$
$$g_2 (x) =- \frac{xA(x)}{\sqrt 2}\, \frac{ z_- (\mu(x)) }{\mu (x)}, \quad g_3 (x) = -\frac{xB(x)}{\sqrt 2}\,\frac{ z_- (\mu(x)) }{\mu (x)}.$$
From 
$\tilde{\mathbb{F}}_1 = \Theta (f_1), \, \tilde{\mathbb{F}}_4 = \Theta (f_4)$ and \eqref{10eqbis} we get 
\be
\label{18eq}
\begin{aligned}
\tilde{\mathbb{F}}_1 (x,z) &= \frac{g_1(x)}{1 - zz_-({\mu(x)})},\\
 \tilde{\mathbb{F}}_2 (x,z) &= \frac{g_2(x)}{1 - zz_-({\mu(x)})}  ,\\
 \tilde{\mathbb{F}}_3 (x,z) &= 1+ \frac{g_3(x)}{1 - zz_-({\mu(x)})}   ,\\
\tilde{\mathbb{F}}_4 (x,z) &= \frac{g_4(x)}{1 - zz_-({\mu(x)})}.
 \end{aligned}
\ee
Since $\mu(x)=2x(1+x^2/r^2)^{-1}$, we find the following formulas for  the functions $g_j$, with 
$z_-=z_-(\mu(x))$
\be
\label{21eq}
\begin{aligned}
g_1 (x) &= \frac{z_-}{2x} \, \frac{(1 - 2xz_-) \left(1 - \frac{5}{2} xz_- + \frac{x^2}{2} z^2_- \right)}{\left(1 - \frac{5}{2} xz_- \right)^2 - \frac{x^2}{2} z^4_-}, \\
g_2 (x) &= \frac{-z_-}{2\sqrt 2} \, \frac{1 - \frac{5}{2} xz_- + \frac{x^2}{2} z^2_- }{\left(1 - \frac{5}{2} xz_- \right)^2 - \frac{x^2}{2} z^4_-}, \\
g_3 (x) &= \frac{xz_-}{4} \, \frac{1 - \frac{5}{2} xz_- +  z^2_- }{\left(1 - \frac{5}{2} xz_- \right)^2 - \frac{x^2}{2} z^4_-}, \\
g_4 (x) &= \frac{-z_-}{2\sqrt 2} \, \frac{(1 - 2xz_-) \left(1 - \frac{5}{2} xz_- +  z^2_- \right)}{\left(1 - \frac{5}{2} xz_- \right)^2 - \frac{x^2}{2} z^4_-}\ .\\
\end{aligned}
\ee

When the orientation of $(\vec{A},\vec{B})$ is of type  $j\in \{1,2,3,4\}$, we will write
$$
G(k,\vec{B},\vec{A}) = G_j (k,d), \; \hbox{with} \; d = dist (m(A),m(B)).
$$
\begin{proposition}\label{prop1}
For $k\geq 1$, and $ j\in \{1,2,3,4\}$, one has 
\be
\label{19eq}
G_j (k,d) = \left(\frac{\sqrt{2}}{3} \right)^k \, \left( \frac{1}{\sqrt{2}} \right)^d  \, 
\frac{1}{k!} \partial_x^k \left[ g_j(x)z^{d}_- (\mu(x)) \right]_{x=0}.
\ee
\end{proposition}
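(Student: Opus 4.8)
The plan is to read off \eqref{19eq} directly from the closed-form expression \eqref{18eq} for the generating functions, once \eqref{12eq} has been transported through the change of variables $X=\sqrt2\,x,\ Z=\sqrt2\,z$. First I would fix $j\in\{1,2,3,4\}$ and set $Y=-\half$ once and for all, so that $\mathbb{F}_j$ becomes a formal power series in $(X,Z)$. By the very definition of $\tilde{\mathbb{F}}_j$ one has $\tilde{\mathbb{F}}_j(x,z)={\mathbb{F}_j}_{|Y=-\half}(\sqrt2\,x,\sqrt2\,z)$, so writing $\tilde{\mathbb{F}}_j(x,z)=\sum_{k,d\geq0}c_{k,d}\,x^kz^d$ and comparing coefficients gives
$$\frac{1}{k!}\partial_X^k\frac{1}{d!}\partial_Z^d\,{\mathbb{F}_j}_{|X=Z=0,\,Y=-\half}=2^{-(k+d)/2}\,c_{k,d}.$$
Substituting this into \eqref{12eq} and using $(2/3)^k\,2^{-k/2}=(\sqrt2/3)^k$ yields
$$G_j(k,d)=\left(\frac{\sqrt2}{3}\right)^k\left(\frac{1}{\sqrt2}\right)^d c_{k,d},\qquad k\geq1,$$
so it only remains to identify the Taylor coefficients $c_{k,d}$ of $\tilde{\mathbb{F}}_j$.

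For that I would use \eqref{18eq}. Since $\mu(0)=0$ and $z_-(0)=0$, the formal power series $z_-(\mu(x))$ vanishes at $x=0$; hence for each fixed $(k,d)$ only finitely many terms of $\sum_{d'\geq0}\bigl(z\,z_-(\mu(x))\bigr)^{d'}$ contribute to the coefficient of $x^kz^d$, and the geometric expansion
$$\frac{1}{1-z\,z_-(\mu(x))}=\sum_{d\geq0}\bigl(z_-(\mu(x))\bigr)^d z^d$$
is a legitimate identity of formal power series. Therefore, for $j\neq3$,
$$\tilde{\mathbb{F}}_j(x,z)=\sum_{d\geq0}g_j(x)\bigl(z_-(\mu(x))\bigr)^d z^d,$$
so $c_{k,d}$ is the coefficient of $x^k$ in $g_j(x)\bigl(z_-(\mu(x))\bigr)^d$; since $g_j$ and $x\mapsto z_-(\mu(x))$ are holomorphic near $x=0$, this equals $\frac{1}{k!}\partial_x^k\bigl[g_j(x)z_-^{d}(\mu(x))\bigr]_{x=0}$. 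For $j=3$ the extra summand $1$ in $\tilde{\mathbb{F}}_3$ affects only the coefficient $c_{0,0}$, so the same identity for $c_{k,d}$ holds for every $k\geq1$. Combining this with the display for $G_j(k,d)$ above gives exactly \eqref{19eq}.

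I do not expect a genuine obstacle: the whole argument rests on the closed formula \eqref{18eq}, which has already been derived, on the holomorphy near $x=0$ of the functions $g_j$ and $x\mapsto z_-(\mu(x))$ (so that the Taylor coefficients in \eqref{19eq} are well defined), and on the elementary fact that the geometric expansion of $(1-z\,z_-(\mu(x)))^{-1}$ is valid in $\mathbb{C}[[x,z]]$ because $z_-(\mu(x))$ has zero constant term. The only point deserving explicit care is the bookkeeping of the two independent scalings $X=\sqrt2\,x$ and $Z=\sqrt2\,z$, which is what turns the symmetric weight $(2/3)^k$ from \eqref{12eq} into the asymmetric prefactor $(\sqrt2/3)^k(1/\sqrt2)^d$ in \eqref{19eq}.
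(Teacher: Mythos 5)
Your argument is correct and is precisely the paper's own proof (which is stated in one line as ``this follows from \eqref{12eq}, \eqref{18eq} and $Z=\sqrt2\,z$, $X=\sqrt2\,x$''); you have simply written out the coefficient bookkeeping, the geometric expansion of $(1-z\,z_-(\mu(x)))^{-1}$ in $\mathbb{C}[[x,z]]$, and the harmless extra constant in the $j=3$ case. Nothing further is needed.
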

\begin{proof}
This follows from \rfb{12eq}, \rfb{18eq} and  $Z = \sqrt{2} z, X = \sqrt{2} x$.
\end{proof}

We denote also by  
$$
\mathcal E_j (k,d) = \ds \sum_{(\vec{A},\vec{B}) \; \hbox{type} \; j, \, \vec{A} \; \hbox{fixed}, \, d(m(A),m(B)) = d} \left|G (k,\vec{A},\vec{B}) \right|^2
$$
the contribution to the energy  at time  $k$  of all oriented wedges $\vec{B}$ such that 
$m(B)$ is at distance $d$ of  $m(A)$ and the orientation of $(\vec{A},\vec{B})$
is of type $j$. One has
$$ \mathcal E (k,d)=\sum_{j\in \{1,2,3,4\}}\mathcal E_j (k,d).$$

 The number of oriented wedge $\vec{B}$, with $dist(m(A),m(B))=d$,  and the orientation of $(\vec{A},\vec{B})$ of 
 a given type  $j\in\{1,2,3,4\}$
 is equal   to $2^d$. Thus,  from \rfb{19eq},  we get 
\be
\label{20eq}
\mathcal E_j (k,d) = 2^d \, \left|G_j (k,d) \right|^2 = \left|\frac{r^k}{k!} \, 
\partial_x^k \left(g_j (x) z_- ^d(\mu(x)) \right)_{x=0} \right|^2,\quad r=\sqrt2/3\ .
\ee

\section{Proof of Theorem \ref{th0}}
In this section, we   study the functions of $d \in \left\{0,1,...,k\right\}$, $d \mapsto \mathcal E_j  (k,d)$,
 and we prove Theorem \ref{th0}. We start by the following Lemma.

\begin{lemme} \label{lemme8}
The functions $g_j (x)$ are holomorphic in the complex disc $D_r = \left\{x, \, |x| < r\right\}$.\\ 
Their boundary value on  $\partial D_r$, $\theta \mapsto g_j (r e^{i\theta})$ is analytic except for $\theta$ equal to $\pm \theta_0, \pm (\theta_0 + \pi)$ with $\cos \theta_0 = 2r$. Near the singular points  $x_0 \in \left\{re^{i\theta_0}, \, re^{-i\theta_0}, \, - re^{i\theta_0}, \, - re^{-i\theta_0} \right\}$ one has $g_j (x) = a_j(x) + (x-x_0)^{1/2} b_j(x),$ with $a_j,b_j$ holomorphic near $x_0$. 
\end{lemme}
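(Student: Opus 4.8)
The plan is to establish three claims in turn: (i) holomorphy of the $g_j$ in $D_r$; (ii) identification of the possible singularities on $\partial D_r$; (iii) the local square-root structure at those singular points. Throughout I would use the explicit formulas \rfb{21eq} together with Lemmas \ref{lemme5}, \ref{lemme6} and \ref{lemme7}.

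\textbf{Step 1: holomorphy in $D_r$.} By Lemma \ref{lemme5} the map $x \mapsto \mu(x)$ is a holomorphic bijection from $D_r$ onto $V \subset U$, so $z_-(\mu(x))$ is holomorphic on $D_r$, and by Lemma \ref{lemme6} one has $|z_-(\mu(x))| < 1$ there. Each $g_j$ in \rfb{21eq} is a rational expression in $x$ and $z_-$ whose only possible poles in $D_r$ come from the vanishing of the common denominator $D(x) := \left(1 - \tfrac{5}{2} x z_-\right)^2 - \tfrac{x^2}{2} z_-^4$ (the apparent pole at $x=0$ of the prefactors $z_-/(2x)$, $xz_-/4$, etc. is removable since $z_-(\mu(x)) = \mu(x) + O(\mu^3)$ and $\mu(x) = 2x + O(x^3)$). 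To rule out zeros of $D$ in $D_r$ I would go back to Lemma \ref{lemme7}: $D(x)$ is precisely the determinant of the $2\times 2$ matrix in \rfb{17eq}, which for $|x|$ small is invertible, and more is true — for every $x \in D_r$ the system \rfb{13tereq} has a unique holomorphic-in-$|z|<1$ solution, forcing that matrix to be invertible, hence $D(x) \ne 0$ on all of $D_r$. (Alternatively one checks directly that $|xz_-| < r < 1/2$, say, gives $|1 - \tfrac52 xz_-| > |{\tfrac{x}{\sqrt2}} z_-^2|$ on $D_r$ by a crude estimate and concludes by the triangle inequality; I expect the cleanest route is the determinant/uniqueness argument.) This gives holomorphy of each $g_j$ in $D_r$.

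\textbf{Step 2: singularities on $\partial D_r$.} The only source of non-analyticity of $\theta \mapsto g_j(re^{i\theta})$ is the function $z_-(\mu(x))$: it is built from $\sqrt{1 - 4\mu^2}$, whose principal branch on $U$ has branch points where $\mu^2 = 1/4$, and from the possible vanishing of $D$. By Lemma \ref{lemme5}, $\mu$ maps $\partial D_r \setminus \{\pm ir\}$ onto $[r,+\infty[ \,\cup\, ]-\infty,-r]$; on these rays $\mu^2$ ranges over $[r^2,+\infty[$, which meets the branch locus $\{\mu^2 = 1/4\}$ exactly when $|\mu| = 1/2$, i.e. $r/\cos\theta = \pm 1/2$, i.e. $\cos\theta = \pm 2r$. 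Using $\cos\theta_0 = 2r$ this picks out $\theta \in \{\pm\theta_0, \pm(\theta_0 + \pi)\}$, corresponding to the four points $x_0 \in \{\pm re^{\pm i\theta_0}\}$. Away from these four points $z_-(\mu(x))$ extends analytically across $\partial D_r$ (the argument of the square root stays off $]-\infty,0]$), and one must also check $D(x) \ne 0$ on the rest of $\partial D_r$ — this follows from the boundary version of Lemma \ref{lemme6}, $|z_-(\mu)| = 1$ on the boundary rays together with $|x| = r < 1/2$ again making $|1-\tfrac52 xz_-|$ dominate, so $D$ has no boundary zeros off the four branch points. Hence $\theta \mapsto g_j(re^{i\theta})$ is analytic except at $\pm\theta_0, \pm(\theta_0+\pi)$.

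\textbf{Step 3: square-root structure at $x_0$.} Near such an $x_0$, $\mu^2(x) - 1/4$ has a simple zero (since $\mu'(x_0) \ne 0$: $x_0 \notin \{\pm ir\}$ and $\mu$ is a local biholomorphism there), so $1 - 4\mu(x)^2 = (x - x_0)\,u(x)$ with $u$ holomorphic and nonvanishing near $x_0$, and therefore $\sqrt{1 - 4\mu^2} = (x-x_0)^{1/2}\,\sqrt{u(x)}$ with $\sqrt{u}$ holomorphic near $x_0$. Consequently $z_-(\mu(x)) = \tfrac{1}{2\mu(x)}\bigl[1 - (x-x_0)^{1/2}\sqrt{u(x)}\bigr]$ has the form $\alpha(x) + (x-x_0)^{1/2}\beta(x)$ with $\alpha,\beta$ holomorphic near $x_0$ (note $\mu(x_0) = \pm 1/2 \ne 0$, so $1/\mu$ is holomorphic there). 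Since each $g_j$ is a rational function of $x$ and $z_-$ with denominator $D(x)$ nonvanishing at $x_0$ — here I invoke once more that $D(x_0) \ne 0$, which needs a separate check: at $x_0$ one has $z_-(\mu(x_0)) = 1/(2\mu(x_0)) = \pm 1$, so $D(x_0) = (1 - \tfrac52 x_0(\pm1))^2 - \tfrac{x_0^2}{2}$, and one verifies this is nonzero, e.g. because $|x_0| = r = \sqrt2/3$ makes $|\tfrac52 x_0| = \tfrac{5\sqrt2}{6} > 1$ so $|1 \mp \tfrac52 x_0| \ge \tfrac{5\sqrt2}{6} - 1 > \tfrac{r}{\sqrt2}$ — plugging the form $z_- = \alpha + (x-x_0)^{1/2}\beta$ into \rfb{21eq} and collecting even and odd powers of $(x-x_0)^{1/2}$ yields $g_j(x) = a_j(x) + (x-x_0)^{1/2} b_j(x)$ with $a_j, b_j$ holomorphic near $x_0$, as claimed.

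The main obstacle is Step 1 — ruling out zeros of $D(x)$ inside $D_r$ (and on $\partial D_r$ off the four branch points) cleanly; the determinant-versus-uniqueness argument via Lemma \ref{lemme7} is the conceptual way to do it, but one should double-check that the uniqueness statement there genuinely applies for all $x \in D_r$ and not merely $|x|$ small, otherwise one falls back on the explicit (slightly tedious) estimate $|1 - \tfrac52 x z_-| > \bigl|\tfrac{x}{\sqrt2} z_-^2\bigr|$ on $\overline{D_r} \setminus \{\pm re^{\pm i\theta_0}\}$.
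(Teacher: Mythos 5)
Your Steps 2 and 3 (locating the branch points of $z_-(\mu(x))$ on $\partial D_r$ via $\mu^2=1/4$, and the local expansion $z_-=\alpha(x)+(x-x_0)^{1/2}\beta(x)$) are correct and match the paper. The genuine gap is in Step 1 and in every place where you need the denominator $D(x)=\bigl(1-\tfrac52 xz_-\bigr)^2-\tfrac{x^2}{2}z_-^4$ to be nonzero. Neither of your two routes works as stated. The determinant/uniqueness route via Lemma \ref{lemme7} is only available for $|x|$ small: that lemma is stated (and its matrix inverted) only near $x=0$, and nothing in the paper's setup gives you solvability of \rfb{13tereq} for every $x\in D_r$ a priori. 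The ``crude estimate'' route is numerically false: since $\tfrac52 r=\tfrac{5\sqrt2}{6}\approx1.18>1$ and $|z_-|\to1$ as $\theta\to\theta_0$ on the boundary, the triangle inequality gives $|1-\tfrac52 xz_-|\ge 1-\tfrac52|x||z_-|$, which is vacuous (negative) precisely in the region that matters; likewise your explicit check at $x_0$ uses $\tfrac{5\sqrt2}{6}-1>\tfrac{r}{\sqrt2}$, i.e.\ $0.179>0.333$, which is false. So the non-vanishing of $D$ on $\overline{D_r}$ minus the four branch points is not established by your argument.

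The paper closes both holes with two ideas you did not use. First, holomorphy of $g_j$ in the open disc $D_r$ is obtained \emph{for free} from unitarity of $\mathbb{G}(1)$: the a priori bound $\sum_{d=0}^k\mathcal E_j(k,d)\le1$ combined with \rfb{20eq} at $d=0$ gives $|g_{j,n}|\le r^{-n}$ for the Taylor coefficients, hence holomorphy in $|x|<r$ with no denominator analysis at all; in particular $D$ cannot produce poles inside $D_r$. Second, the boundary non-vanishing of $D$ is proved by elimination: assuming $1-\tfrac52 xz_-=\varepsilon\tfrac{x}{\sqrt2}z_-^2$ together with $z_-^2-\tfrac{z_-}{\mu}+1=0$, the resultant in $z_-$ yields the explicit polynomial \rfb{25eq} in $y=x/r$, whose roots all satisfy $|y|\ne1$ (the one root with $|y|<1$ being discarded as belonging to the branch $z_+$, using again the a priori holomorphy). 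You would need to supply one of these two mechanisms (or an equivalent) to make Step 1 and the $D(x_0)\ne0$ checks rigorous.
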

\begin{proof}
Since we know a priori that $\ds \sum_{d=0}^k \mathcal E_j (k,d) \leq 1,$ \rfb{20eq} implies
that the fonctions $x \longmapsto g_j (x)$ 
are holomorphic in the complex disc $D_r = \left\{x, \, |x| < r=\sqrt 2/3\right\}$ (take $d=0$). More precisely 
we get from \rfb{20eq}, with $g_j(x) = \ds \sum_{n=0}^\infty g_{j,n} x^n$: 
\be
\label{22eq}
|g_{j,n}| = \left|\frac{1}{n!} \, \partial_x^n g_j (0) \right| \leq r^{-n} \, \sqrt{\mathcal E_j (n,0)} \leq r^{-n}.
\ee

To analyze the singularities of $g_j$ on   $\partial D_r$, we use formulas \eqref{21eq}.
 From  Lemmas \ref{lemme5} et \ref{lemme6} , we know that the function $h(x)=z_-(\mu(x))$ is holomorphic
 in $ D_r$ and $\vert h(x)\vert <1$. Moreover, the function $h$ satisfies  $h(0)=0$, $h'(0)=1$, and  $$ h^2-(1+\frac{x^2}{r^2})\frac{h}{2x}+1=0. $$
 This shows $h(x)=0$ iff $x=0$, and the only possible singularities of $h$ are located at the zeros of the discriminant of this second order equation,
 which means $1+\frac{x^2}{r^2}=\pm 4x$ which is equivalent to 
 $x \in \left\{re^{i\theta_0}, \, re^{-i\theta_0}, \, - re^{i\theta_0}, \, - re^{-i\theta_0} \right\}$. Moreover, near the singular points  $x_0 \in \left\{re^{i\theta_0}, \, re^{-i\theta_0}, \, - re^{i\theta_0}, \, - re^{-i\theta_0} \right\}$ one has $h (x) = a(x) + (x-x_0)^{1/2} b(x),$ with $a,b$ holomorphic near $x_0$. 
 
It remains to verify
\be
\label{23eq}
|x| \leq r \Rightarrow \left(1 - \frac{5}{2} xz_- \right)^2 - \frac{x^2}{2} z^4_- \neq 0,
\ee
and we already know that \rfb{23eq} is true for $|x| < r$. Let us assume $1 - \frac{5}{2} xz_- = \varepsilon \frac{x}{\sqrt{2}} z^2_-$ with $\varepsilon = \pm 1$. Then one has
\be
\label{24eq}
\left\{
\begin{array}{ll}
  \frac{\varepsilon x}{\sqrt{2}} z^2_- + \frac{5}{2} xz_- - 1 =  0 \\
z^2_- - \frac{z_-}{\mu} + 1 = 0.
\end{array}
\right.
\ee
We eliminate  $z_-$ in  the polynomial system \rfb{24eq}. Then,  with $x = ry = \frac{\sqrt{2}}{3} y$ we get that  $y$ 
satisfies the  polynomial equation:
\be
\label{25eq}
y^4 - 6\varepsilon y^3 + 8 y^2 - 6 \varepsilon y - 9 = (y^2 -2\varepsilon y + 3)(y^2 -4\varepsilon y -3) = 0\ .
\ee
Therefore, one has ever $\varepsilon y = 1 \pm i \sqrt{2}$ which implies  $|x| = r |y| > r,$ or $\varepsilon y = 2 \pm \sqrt{7},$  and then $|x| = r|y| \neq r.$  
\begin{remarque} Observe that in the case  $\varepsilon y = 2 - \sqrt{7} \approx - 0,646,$ we have $|x| = r |y| < r,$ but for this particular value of $x$, one verifies  
$
1 - \frac{5}{2} xz_- + \frac{x^2}{2} z^2_- \neq 0.
$
Since $g_2$ is holomorphic near $x$, this shows that the value $\varepsilon y = 2 - \sqrt{7}$ is associated  to the root $z_+ = \frac{1}{z_-}$ in \eqref{24eq}.
\end{remarque}
\noindent The proof of Lemma \ref{lemme8} is complete.
\end{proof}

We define $\psi_j(k,d), j\in\{1,2,3,4\}$, such that $\mathcal E_j (k,d) = \left|\psi_j(k,d) \right|^2$,  by the formula
\be
\label{26eq}
\psi_j(k,d) = r^k \, \frac{1}{k!} \, \partial_x^k \left[ g_j(x) z^d_-(\mu(x))\right]_{|x=0}.
\ee
By the proof of Lemma \ref{lemme8}, we know that the function 
$h(x)=z_-(\mu(x))$ is holomorphic in $D_r$, continuous 
on $\overline D_r$, satisfies $\vert h(x)\vert\leq 1$, $h(x)=0$ iff $x=0$, and  its singularities on $\partial D_r$ are at 
$x \in \left\{re^{i\theta_0}, \, re^{-i\theta_0}, \, - re^{i\theta_0}, \, - re^{-i\theta_0} \right\}$. 
Therefore, there exists a   function  $\varphi(\theta)$  , holomorphic in $Im(\theta)> 0$, 
with $\varphi(\theta+2\pi)=\varphi(\theta)+2\pi$, 
continuous for $Im(\theta)\geq 0$, such that $Im(\varphi(\theta))\geq 0$, with singularities at $\theta\in \{\pm \theta_0, \pm(\theta_0+\pi)\}$, such that 
$$z_- (\mu(re^{i\theta})) = z_- (\frac{r}{\cos\theta})=e^{i\varphi(\theta)}\ .$$ 
We normalize it by choosing $\varphi(\theta_0)=0$.\\
From $z_- = e^{i\varphi}, \, z_+ = e^{-i\varphi}$ and  $z_+ + z_- = \frac{1}{\mu}=\frac{\cos \theta}{r},$ we get 
$$2r \cos \varphi(\theta) = \cos \theta \ .$$ One find 
the following explicit formulas for the function $\varphi(\theta)$
\be\label{29eqter}
\begin{aligned}
\varphi(\theta)&=i\,argch\big(\frac{\cos\theta}{2r}\big) \quad \text{for} \ \ \theta\in [-\theta_0,\theta_0] \ ,\\
\varphi(\theta)&=arccos\big(\frac{\cos\theta}{2r}\big) \quad \text{for} \ \ \theta\in [\theta_0,\pi-\theta_0] \ ,\\
\varphi(\theta)&=\pi+i\,argch\big(\frac{\vert \cos\theta\vert }{2r}\big) \quad \text{for} \ \ \theta\in [\pi-\theta_0,\pi+\theta_0] \ ,\\
\varphi(\theta)&=\pi+arccos\big(\frac{-\cos\theta}{2r}\big) \quad \text{for} \ \ \theta\in [\pi+\theta_0,2\pi-\theta_0] \ .\\
\end{aligned}
\ee
In particular, we get  $\varphi(\theta+\pi)=\varphi(\theta)+\pi$.\\
The variations of the function $\theta \mapsto \varphi (\theta)$ are resumed in the following pictures:

\begin{center}
\includegraphics[scale=0.80]{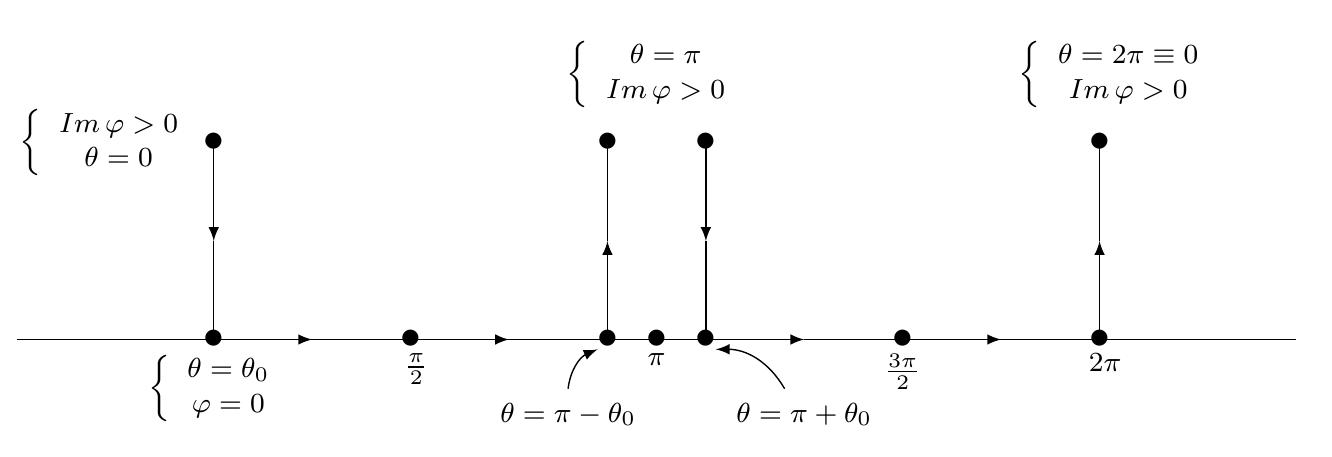} 
\captionof{figure}{} 
\end{center}

\begin{center}
\includegraphics[scale=0.80]{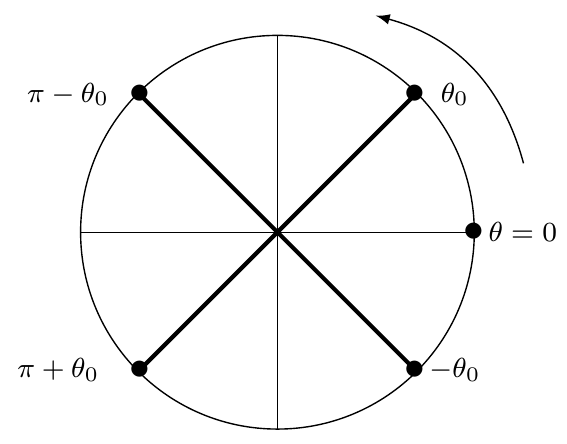} 
\captionof{figure}{} 
\end{center}

By Cauchy formula, one has
\be\label{27eq}
\psi_j(k,d) = \frac{r^k}{2i\pi} \, \int_{\vert \zeta\vert=r} \frac{g_j (\zeta) z_- (\mu(\zeta))^d}{\zeta^{k+1}} \, d\zeta=
\frac{1}{2\pi} \, \int_0^{2\pi} g_j (r e^{i\theta}) \, e^{i (d \varphi(\theta) - k\theta)} \, d\theta.
\ee

For $d \in [0,k]$, we  define $\gamma \in [0,1]$ by $d = \gamma k$. We will apply the phase stationary  method with  $k$
as great parameter   to the integral 
\be
\label{28eq}
\psi_j(k,d) = \frac{1}{2\pi} \, \int_0^{2\pi} g_j (r e^{i\theta}) \, e^{ik (\gamma \varphi(\theta) - \theta)} \, d\theta.
\ee

For $\theta \notin [\theta_0,\pi - \theta_0] \cup [\pi + \theta_0, - \theta_0],$ one has $Im \, \varphi > 0$. For $\theta \in [\theta_0, \pi - \theta_0],$ one has $\varphi \in [0,\pi]$ and $\partial_\theta \varphi = \frac{\sin \theta}{2 r \sin \varphi}$. We thus get 
\be
\label{29eq}
\partial_\theta \varphi = \frac{\sin \theta}{2r \left( 1 - \frac{\cos^2 \theta}{4r^2} \right)^{1/2}} = \frac{1}{2r} \, \frac{\sqrt{1 - 4 r^2 z^2}}{\sqrt{1 - z^2}}, \quad z = \frac{\cos \theta}{2r} \in [-1,1].
\ee
Hence, one has  $\partial_\theta \varphi \geq \frac{1}{2r}$ and equality iff  $\theta = \frac{\pi}{2},$ and 
$\partial_\theta \varphi(\theta_0)=\partial_\theta \varphi(\pi-\theta_0)=+\infty$ . 
For $\theta \in ]\theta_0, \pi - \theta_0[$, one finds
\be\label{29eqbis}
4r^2 \, (\sin\varphi)^3 \,\partial^2_\theta \varphi =2r\cos\theta(\sin\varphi)^2-(\sin\theta)^2\cos\varphi=
-\cos\theta \,\left(\frac{1}{2r}-2r\right).
\ee

 The phase in \rfb{28eq} 
is equal to $\gamma \varphi(\theta) - \theta$. Thus the phase is stationary iff $\partial_\theta \varphi =1/\gamma. $

\begin{theoreme}\label{th1}
Let $c_*=2\sqrt{2}/3=2r<1$ and $j\in \{1,2,3,4\}$.\\
 For  $k\in \mathbb N^*$ and $d\in \mathbb N$, let $\gamma=d/k$. 
\begin{enumerate}
\item
For $\gamma  > c_*$, there exists $a= a_{j}(\gamma)>0$ such that  $\psi_j(k,d) \in { O} (e^{-ak})$.
\item
For $\gamma \in ]0,c_*[,$ there exists $b_l= b_{l;j}(\gamma), a_l = a_{l;j}(\gamma),$ such that 
$$
\psi_j(k,d)  =  k^{-1/2} \sum_{l\in \{1,2,3,4\}}e^{ia_l k} \left(b_l+O(k^{-1}) \right) \ .
$$
\item
For $\gamma -c_*=\delta$ with $\delta$ small, there exists two functions $\zeta(\delta)=\frac{2r}{\rho}\delta+O(\delta^2)$,
$\tau(\delta)=O(\delta^2)$, where $\rho=\frac{1}{2}(\frac{1}{4r^2}-1)$, and  for $l=1,2$, symbols of degree $0$ in $k$,
$a_{l,\pm;j}(\delta,k)\simeq \sum_{n\geq 0}k^{-n}a_{l,\pm ,n;j}(\delta)$,  such that
$$\psi_j(k,d)=e^{ik\pi(\gamma-1)} \big(  e^{-ik\frac{\pi}{2}(\gamma-1)} A_+ + e^{ik\frac{\pi}{2}(\gamma-1)}A_- \big)$$
$$A_\pm=e^{ik\tau}(k\rho)^{-1/3}
\big( a_{1,\pm;j}Ai((k\rho)^{2/3}\zeta) + k^{-1/3}a_{2,\pm;j}Ai'((k\rho)^{2/3}\zeta)  \big), \ $$
where $Ai(z)=\frac{1}{2\pi}\int e^{i(t^3/3+zt)}dt$ is the Airy function.

\end{enumerate}
\end{theoreme}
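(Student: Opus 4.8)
The plan is to apply the stationary phase method to the integral \rfb{28eq},
\[
\psi_j(k,d)=\frac1{2\pi}\int_0^{2\pi}g_j(re^{i\theta})\,e^{ik\Phi_\gamma(\theta)}\,d\theta,\qquad \Phi_\gamma(\theta)=\gamma\varphi(\theta)-\theta,
\]
exploiting the structure of $\varphi$ recorded above: $\varphi$ is real on the two \emph{elliptic} arcs $E_1=[\theta_0,\pi-\theta_0]$ and $E_2=[\pi+\theta_0,2\pi-\theta_0]$ while $\mathrm{Im}\,\varphi>0$ on the complementary \emph{hyperbolic} arcs; on $E_1\cup E_2$ one has $\partial_\theta\varphi\ge\frac1{2r}=\frac1{c_*}$ with equality only at $\theta=\frac\pi2,\frac{3\pi}2$; by \rfb{29eqbis} and one further differentiation $\partial_\theta^2\varphi$ changes sign exactly at $\frac\pi2$ on $E_1$, with $\partial_\theta^3\varphi(\frac\pi2)=\frac1{2r}(\frac1{4r^2}-1)$, so that $\partial_\theta^3\Phi_{c_*}(\frac\pi2)=2\rho\neq0$ for $\rho=\frac12(\frac1{4r^2}-1)$. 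I also read off from \rfb{21eq} the parity $g_j(-x)=\varepsilon_j\,g_j(x)$ with $\varepsilon_1=\varepsilon_3=1$, $\varepsilon_2=\varepsilon_4=-1$: indeed $\mu(-x)=-\mu(x)$ and $z_-(\mu(-x))=-z_-(\mu(x))$, hence $xz_-$, $x^2z_-^2$, $x^2z_-^4$, $z_-^2$ are even, while the prefactors $\frac{z_-}x$ (in $g_1$) and $xz_-$ (in $g_3$) are even and $z_-$ (in $g_2,g_4$) is odd. Together with $\varphi(\theta+\pi)=\varphi(\theta)+\pi$ this will tie $E_2$ to $E_1$.

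First I would dispose of the hyperbolic arcs and of small neighbourhoods of the four branch points $\pm\theta_0,\pm(\theta_0+\pi)$. On a hyperbolic arc $|e^{ik\Phi_\gamma}|=e^{-k\gamma\,\mathrm{Im}\,\varphi}$ is exponentially small away from the endpoints; near a branch point $x_0$, using $\varphi(\theta)=\varphi(\theta_0)+\sqrt{a(\theta-\theta_0)}+O(\theta-\theta_0)$ with $a>0$ together with $g_j(x)=a_j(x)+(x-x_0)^{1/2}b_j(x)$ (Lemma \ref{lemme8}), passing to a local square–root coordinate $w$ (so that $\theta$ is quadratic in $w$) turns the local integral into $\int_0^\infty\tilde\chi(w)\,e^{ik\Phi_\gamma(w)}dw$ with $\tilde\chi$ smooth, compactly supported, vanishing at $w=0$, and $\partial_w\Phi_\gamma(0)\neq0$; two integrations by parts (or, on the hyperbolic side, the exponential decay) give $O(k^{-2})$. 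Hence, up to $O(k^{-2})$, $\psi_j(k,d)$ is the sum over $i=1,2$ of $\frac1{2\pi}\int_{E_i}\chi(\theta)\,g_j(re^{i\theta})\,e^{ik\Phi_\gamma(\theta)}d\theta$, with a smooth cutoff $\chi$ supported in the interiors of $E_1\cup E_2$, where $g_j$ and $\varphi$ are real–analytic. Substituting $\theta\mapsto\theta+\pi$ in the $E_2$ integral and using $\Phi_\gamma(\theta+\pi)=\Phi_\gamma(\theta)+\pi(\gamma-1)$ and $g_j(re^{i(\theta+\pi)})=\varepsilon_j g_j(re^{i\theta})$ shows that the $E_2$ contribution equals $\varepsilon_j e^{ik\pi(\gamma-1)}$ times an $E_1$–integral of the same shape, but localised near $\frac{3\pi}2$ instead of $\frac\pi2$; since $\Phi_\gamma(\frac\pi2)=\frac\pi2(\gamma-1)$ and $\Phi_\gamma(\frac{3\pi}2)=\frac{3\pi}2(\gamma-1)$, this produces the prefactors $e^{ik\pi(\gamma-1)}e^{\mp ik\frac\pi2(\gamma-1)}$ of the statement.

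It remains to analyse $\int_{E_1}\chi\,g_j\,e^{ik\Phi_\gamma}d\theta$. On $E_1$ the equation $\partial_\theta\Phi_\gamma=\gamma\partial_\theta\varphi-1=0$ has no solution when $\gamma>c_*$ (then $\partial_\theta\Phi_\gamma\ge\gamma/c_*-1>0$), two simple nondegenerate roots $\theta_1(\gamma)\in(\theta_0,\frac\pi2)$, $\theta_2(\gamma)\in(\frac\pi2,\pi-\theta_0)$ when $\gamma<c_*$ (nondegenerate since $\partial_\theta^2\varphi\neq0$ off $\frac\pi2$), and a single root coalescing at $\frac\pi2$ as $\gamma\to c_*$. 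For $\gamma>c_*$ I would deform the contour a fixed amount into $\{\mathrm{Im}\,\theta>0\}$ --- where $g_j(re^{i\cdot})$, $h=z_-(\mu(re^{i\cdot}))$ and $\varphi=-i\log h$ are holomorphic, and near $\pm\theta_0$ one deforms in the variable $w=\sqrt{\theta-\theta_0}$ --- using $\partial_\theta\Phi_\gamma$ bounded away from $0$ to obtain $\mathrm{Im}\,\Phi_\gamma\ge c>0$ on the new contour; this gives $\psi_j(k,d)=O(e^{-ak})$ with $a=a_j(\gamma)>0$, which is item (1). For $\gamma$ in a compact subset of $]0,c_*[$ (so $\theta_1,\theta_2$ remain at positive distance from $\frac\pi2$ and from the branch points, hence uniformly nondegenerate), the standard stationary phase expansion at $\theta_1,\theta_2$, together with the two homologous points $\theta_1+\pi,\theta_2+\pi$ on $E_2$, gives $\psi_j(k,d)=k^{-1/2}\sum_{l=1}^4 e^{ia_l k}(b_l+O(k^{-1}))$ with $a_l=\Phi_\gamma(\theta_l)$ and $b_l$ proportional to $g_j(re^{i\theta_l})\,|\partial_\theta^2\Phi_\gamma(\theta_l)|^{-1/2}$, which is item (2).

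For item (3), with $\gamma=c_*+\delta$ and $\delta$ small, $\theta_1,\theta_2$ coalesce at $\frac\pi2$, and I would use the Chester--Friedman--Ursell uniform normal form for two merging simple critical points. Since $\partial_\theta\Phi_\gamma(\frac\pi2)=\frac\delta{2r}$, $\partial_\theta^2\Phi_\gamma(\frac\pi2)=0$ and $\partial_\theta^3\Phi_\gamma(\frac\pi2)=2\rho+O(\delta)\neq0$, there is a change of variables $\theta=\chi_\delta(s)$, analytic in $s$ and smooth in $\delta$ through $\delta=0$, valid on the support of $\chi$ near $\frac\pi2$ (at fixed positive distance from $\pm\theta_0$), with
\[
\Phi_\gamma(\chi_\delta(s))=\tfrac\pi2(\gamma-1)+\tau(\delta)+\rho\Bigl(\tfrac{s^3}3+\zeta(\delta)\,s\Bigr);
\]
matching Taylor coefficients of the phase at $\frac\pi2$ yields $\zeta(\delta)=\frac{2r}\rho\delta+O(\delta^2)$ (in particular $\zeta<0$ for $\delta<0$, recovering the two real stationary points of item (2)) and $\tau(\delta)=O(\delta^2)$ (the mean of the two critical values differs from $\Phi_\gamma(\frac\pi2)=\frac\pi2(\gamma-1)$ only at order $\delta^2$). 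Writing the transformed amplitude $\chi(\chi_\delta(s))\,g_j(re^{i\chi_\delta(s)})\,\chi_\delta'(s)$ as $p_0(\delta)+p_1(\delta)\,s+(s^2+\zeta(\delta))R_\delta(s)$, integrating the last term by parts against $\partial_s e^{ik\rho(s^3/3+\zeta s)}=ik\rho(s^2+\zeta)e^{ik\rho(s^3/3+\zeta s)}$ gains a factor $k^{-1}$, and iterating produces the symbols $a_{l,+;j}(\delta,k)\simeq\sum_{n\ge0}k^{-n}a_{l,+,n;j}(\delta)$; the base integrals $\int e^{ik\rho(s^3/3+\zeta s)}ds$ and $\int s\,e^{ik\rho(s^3/3+\zeta s)}ds$ become, after $s=(k\rho)^{-1/3}t$, $2\pi(k\rho)^{-1/3}Ai((k\rho)^{2/3}\zeta)$ and $\frac{2\pi}i(k\rho)^{-2/3}Ai'((k\rho)^{2/3}\zeta)$. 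Factoring $e^{ik\frac\pi2(\gamma-1)}=e^{ik\pi(\gamma-1)}e^{-ik\frac\pi2(\gamma-1)}$ out of $e^{ik\Phi_\gamma\circ\chi_\delta}$ then presents the $E_1$–part as $e^{ik\pi(\gamma-1)}e^{-ik\frac\pi2(\gamma-1)}A_+$ with $A_+=e^{ik\tau}(k\rho)^{-1/3}\bigl(a_{1,+;j}Ai((k\rho)^{2/3}\zeta)+k^{-1/3}a_{2,+;j}Ai'((k\rho)^{2/3}\zeta)\bigr)$, and the $E_2$–part, treated identically via the arc relation above, as $e^{ik\pi(\gamma-1)}e^{ik\frac\pi2(\gamma-1)}A_-$ with $A_-$ equal to $A_+$ up to $a_{l,-;j}=\varepsilon_j a_{l,+;j}$ and base phase $\Phi_\gamma(\frac{3\pi}2)$. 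Adding the two and the $O(k^{-2})$ remainder gives the stated formula. The main obstacle is precisely this last item: constructing $\chi_\delta$ smoothly across the caustic $\delta=0$, carrying the expansion to all orders, and pinning down the constants $\rho$, $\zeta(\delta)$, $\tau(\delta)$; once the global phase factor and the parity linking $E_1$ to $E_2$ are in hand, items (1) and (2) are routine contour deformation and stationary phase.
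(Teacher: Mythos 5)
Your proposal follows the same route as the paper's proof: the integral representation \eqref{28eq}, contour deformation into $\mathrm{Im}\,\theta>0$ for $\gamma>c_*$, stationary phase at the four nondegenerate critical points for $\gamma\in]0,c_*[$, and a coalescing--critical--point (Airy, Chester--Friedman--Ursell) analysis at $\theta=\pi/2,3\pi/2$ for $\gamma\simeq c_*$; you simply supply more detail than the paper does (explicit handling of the branch points $\pm\theta_0,\pm(\pi+\theta_0)$ and of the hyperbolic arcs, the parity $g_j(-x)=\pm g_j(x)$ tying the arc $[\pi+\theta_0,2\pi-\theta_0]$ to $[\theta_0,\pi-\theta_0]$, and the normal--form construction that the paper only cites as classical). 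One small inconsistency worth noting: from your own data $\partial_\theta\Phi_\gamma(\pi/2)=\delta/(2r)$ and cubic coefficient $\rho/3$, the Taylor matching gives $\zeta(\delta)=\frac{1}{2r\rho}\,\delta+O(\delta^2)$ rather than the $\frac{2r}{\rho}\,\delta$ you assert (which reproduces the theorem as printed), so this is a constant to recheck rather than a gap in the argument.
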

\begin{proof}
\begin{enumerate}
\item
For $\gamma > c_*$, the phase has   no real critical points, and the result follows by deformation of the integral \rfb{28eq}
on the contour  $\theta = \mu + i \varepsilon, \varepsilon > 0, \, \mu \in [0,2\pi]$ since $Im \, (\gamma \varphi(\theta) - \theta) > 0$
on this contour. Equivalently, we replace in \eqref{27eq}
the circle $\vert \zeta\vert=r$ by the circle $\vert \zeta\vert=re^{-\varepsilon}$.

\item
For $\gamma \in ]0,c_*[,$ the phase has $4$ non degenerate critical points $\theta_1,\theta_2,\theta_3,\theta_4$,
with $\theta_0<\theta_1<\pi/2< \theta_2<\pi-\theta_0$ and $\theta_3=\pi+\theta_1, \; \theta_4=\pi+\theta_2 \ .$
This follows from the fact that $\partial_\theta \varphi$ is strictly decreasing from $+\infty$ to $1/2r$ on $]\theta_0,\pi/2]$
and strictly increasing from $1/2r$ to $+\infty $ on $[\pi/2,\pi-\theta_0[$. We can then apply the stationary phase theorem,
on a complex deformation
of the contour leaving the critical points fixed, and avoiding the singularities $\{\pm \theta_0,\pm (\pi-\theta_0)\}$ of the phase and symbols in \eqref{28eq} (since $\gamma\partial_\theta \varphi(\theta)-1>>0 $ near $\theta_0,\pi-\theta_0$, we use a deformation 
$\theta=\mu+i\varepsilon, \varepsilon >0$ near  the singular points $\theta_0,\pi-\theta_0$).


\item
$\gamma=c_*=2r$ is a  transition point with two double critical points at  $\theta = \frac{\pi}{2}$ and $\theta = \frac{3\pi}{2}.$ This gives  contributions of Airy type. More precisely, let $\gamma-c_*=\delta$ small and 
set $\Phi(\theta)=\gamma \varphi(\theta)-\theta$. One has 
$$\varphi(\pi/2)=\pi/2, \, \varphi'(\pi/2)=\frac{1}{2r}\, , \varphi''(\pi/2)=0\, ,  \varphi'''(\pi/2)=\frac{1}{2r}(\frac{1}{4r^2}-1) \ .$$ 
Thus for $y$ small, we find  
$$ \Phi(\pi/2+y)=\frac{\pi}{2} (2r-1)+ (\frac{1}{4r^2}-1)y^3/6+O(y^4)+\delta \varphi(\pi/2+y) \ .$$
Then  from classical results on phase integrals with degenerate critical points of order two, we get the term 
$A_+$ which is associated to the integral 
$$ \int_0^{\pi} g_j (r e^{i\theta}) \, e^{ik (\gamma \varphi(\theta) - \theta)} \, d\theta.$$
Using $\Phi(\theta+\pi)=\Phi(\theta)+(\gamma-1)\pi$, the same analysis give the term
$A_-$ which is associated to the integral $ \int_\pi^{2\pi} g_j (r e^{i\theta}) \, e^{ik (\gamma \varphi(\theta) - \theta)} \, d\theta.$
\end{enumerate}
The proof of Theorem \ref{th1} is complete.
\end{proof}

\begin{proof} of Theorem \ref{th0}.
The points (1) and (2) of Theorem \ref{th0} are consequences of points  (1) and (2) of Theorem \ref{th1}.
The  proof of (3) in Theorem \ref{th0} is easy:  for $\gamma=0$, i.e $d=0$, the result follows from the fact that the $\psi_j(k,0)$ are the Fourier coefficient of the function 
$g_j$, which by Lemma \ref{lemme8}, has at most  $\sqrt{.}$ type singularities.
This leads to  $\psi_j(k,0)\in O(k^{-3/2})$\ .

\end{proof}

\end{document}